\begin{document}
\newtheorem{theoreme}{Theorem}
\newtheorem{lemma}{Lemma}[section]
\newtheorem{proposition}[lemma]{Proposition}
\newtheorem{corollary}[lemma]{Corollary}
\newtheorem{definition}[lemma]{Definition}
\newtheorem{conjecture}[lemma]{Conjecture}
\newtheorem{remark}[lemma]{Remark}
\newtheorem{exe}{Exercise}
\newtheorem{theorem}[lemma]{Theorem}
\theoremstyle{definition}
\numberwithin{equation}{section}
\newcommand{\ud}{\mathrm{d}}
\newcommand{\R}{\mathbb R}
\newcommand{\TT}{\mathbb T}
\newcommand{\Z}{\mathbb Z}
\newcommand{\N}{\mathbb N}
\newcommand{\Q}{\mathbb Q}
\newcommand{\Var}{\operatorname{Var}}
\newcommand{\tr}{\operatorname{tr}}
\newcommand{\supp}{\operatorname{Supp}}
\newcommand{\intinf}{\int_{-\infty}^\infty}
\newcommand{\me}{\mathrm{e}}
\newcommand{\mi}{\mathrm{i}}
\newcommand{\dif}{\mathrm{d}}
\newcommand{\beq}{\begin{equation}}
\newcommand{\eeq}{\end{equation}}
\newcommand{\beqq}{\begin{equation*}}
\newcommand{\eeqq}{\end{equation*}}
\newcommand{\ben}{\begin{eqnarray}}
\newcommand{\een}{\end{eqnarray}}
\newcommand{\beno}{\begin{eqnarray*}}
\newcommand{\eeno}{\end{eqnarray*}}
\def \f{\frac}
\def\d{\delta}
\def\a{\alpha}
\def\e{\varepsilon}
\def\ld{\lambda}
\def\p{\partial}
\def\v{\varphi}
\newcommand{\D}{\Delta}
\newcommand{\Ld}{\Lambda}
\newcommand{\n}{\nabla}
\newcommand{\GG}{\text{g}}
\begin{abstract}
In this paper, we consider the local smoothing estimate associated with half-wave
 operator $e^{it\sqrt{-\Delta}}$ and the related fractional Schr\"{o}dinger operator.
 Using the broad norm introduced by Guth \cite{GHI},
 we improve the previous best results of local smoothing estimate
 for the wave equation in higher dimensions.
 For the fractional Schr\"{o}dinger operator $e^{it(-\Delta)^{\alpha/2}}$ with
 $\alpha>1$, we establish the sharp local smoothing estimate in term of regularity, which
 improves the previous results of \cite{Guo,RS}.
\end{abstract}

\title[Improved local smoothing estimates for fractional Schr\"odinger operator]
{Improved local smoothing estimates for the fractional Schr\"odinger operator}

\author[C. Gao]{Chuanwei Gao}
\address{Beijing International Center for Mathematical Research, Peking University, Beijing, China}
\email{cwgao@pku.edu.cn}

\author[C. Miao]{Changxing Miao}
\address{Institute for Applied Physics and Computational Mathematics, Beijing, China}
\email{miao\textunderscore changxing@iapcm.ac.cn}

\author[J. Zheng]{Jiqiang Zheng}
\address{Institute for Applied Physics and Computational Mathematics, Beijing, China}
\email{zhengjiqiang@gmail.com, and zheng\textunderscore jiqiang@iapcm.ac.cn}

\subjclass[2010]{Primary:35S30; Secondary: 35L15}

\keywords{Local smoothing;  Fractional Schr\"{o}dinger Operator;
 $k$-broad ``norm"}

\begin{abstract}
In this paper, we consider local smoothing estimates for the fractional Schr\"{o}dinger operator $e^{it(-\Delta)^{\alpha/2}}$ with
 $\alpha>1$.
 Using the $k$-broad ``norm" estimates of Guth-Hickman-Iliopoulou \cite{GHI},
 we improve the previously  best-known results of local smoothing estimates
 of \cite{Guo,RS}.
\end{abstract}

\maketitle

\section{introduction}\label{section-1}
Let $u$ be the solution for the Cauchy problem of the fractional Schr\"odinger  equation
\begin{equation}\label{eq-01}\left\{
 \begin{aligned}
     i\partial_tu+(-\Delta)^{\f{\alpha}{2}}u&=0,\quad (t,x)\in\R\times\R^n\\
   u(0,x)&=f(x),
     \end{aligned}\right.
   \end{equation}
   where $\alpha>1$ and $f$ is a Schwartz function. The solution $u$ is expressed by
   \begin{equation}\label{eq:e1}
   u(x,t)=e^{it(-\Delta)^{\f{\alpha}{2}}}f(x):=\f{1}{(2\pi)^n}\int_{\R^n} e^{i(x\cdot\xi+t|\xi|^\alpha)}\hat{f}(\xi)\dif \xi.
   \end{equation}
 We are concerned with $L^p$-regularity for the solution $u$. For the fixed time $t$,   Fefferman and Stein \cite{FS}, Miyachi \cite{Miya2} showed the following optimal  $L^p$ estimate:
 \beq \label{eq-08}
 \|e^{it(-\Delta)^{\f{\alpha}{2}}}f\|_{L^p(\R^n)}\leq C_{t,p}\|f\|_{L^p_{s_{\alpha,p}}(\R^n)},\;\;\;\; s_{\alpha,p}:=\alpha n\Big|\frac{1}{2}-\frac{1}{p}\Big|, \;\; 1<p<\infty,
 \eeq
 where the constant $C_{t,p}$ is locally bounded.

This estimate trivially leads to the following spacetime estimate
\begin{equation}\label{eq-03}
    \Big(\int_1^2 \| e^{it(-\Delta)^{\frac{\alpha}{2}}}f\|_{L^p(\R^{n})}^p \,\ud t \Big)^{1/p} \lesssim \| f \|_{L^p_{s_{\alpha,p}}(\R^n)}.
\end{equation}
As one can see, compared with \eqref{eq-08}, \eqref{eq-03} does not gain any profits from taking an average over time.  In contrast with the fixed time estimate, a natural question appears: can one gain
some regularities by considering the spacetime integral? More precisely, is there an $\varepsilon >0$ such that
\begin{equation}
    \Big(\int_1^2 \| e^{it(-\Delta)^{\frac{\alpha}{2}}}f \|_{L^p(\R^{n})}^p \,\ud t \Big)^{1/p} \lesssim \| f \|_{L^p_{s_{\alpha,p}-\varepsilon}(\R^n)}?
\end{equation}

 Taking the example in \cite{RS} into account, it seems natural to formulate the following local smoothing conjecture for the fractional  Schr\"odinger operator.

\begin{conjecture}[Local smoothing for the fractional Schr\"odinger operator]
Let $\alpha>1,p>2+\frac2n$ and $s\ge\alpha n(\tfrac{1}{2}-\tfrac{1}{p})-\f{\alpha}{p}$. Then
\beq \label{eq-09}
\|e^{it(-\Delta)^{\f{\alpha}{2}}}f\|_{L^p(\R^n\times [1,2])}\leq C_{p,s}\|f\|_{L^p_s(\R^n)}.
\eeq
  \end{conjecture}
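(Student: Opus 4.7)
The plan is to run the broad--narrow induction-on-scales argument of Guth, fed by the $k$-broad ``norm'' estimates of Guth--Hickman--Iliopoulou \cite{GHI}, combined with a parabolic rescaling adapted to the phase $|\xi|^{\a}$. The first step is a Littlewood--Paley decomposition $f=\sum_{\ld\text{ dyadic}} f_\ld$ with $\widehat{f_\ld}$ supported in $\{|\xi|\sim\ld\}$; since the frequency-localised pieces are almost orthogonal in $L^p$, it suffices to establish, for each dyadic $\ld\ge1$ and every $\e>0$,
\beq\label{eq:freq-local}
\|e^{it(-\D)^{\a/2}}f_\ld\|_{L^p(\R^n\times[1,2])}\lesssim \ld^{s_{\a,p}-\a/p+\e}\|f_\ld\|_{L^p(\R^n)},
\eeq
and then to remove the $\ld^\e$ loss via the $\e$-removal technique of Tao.

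Next, the change of variables $(x,t)\mapsto(\ld^{-1}x,\ld^{-\a}t)$ converts \eqref{eq:freq-local} into an extension-type inequality at unit frequency on a spacetime ball of radius $R\sim\ld^{\a}$, of the form $\|Ef\|_{L^p(B_R)}\lesssim R^{\sigma(\a,n,p)+\e}\|f\|_{L^p}$, where
\[
Ef(x,t)=\int_{|\xi|\sim1}\me^{\mi(x\cdot\xi+t|\xi|^{\a})}f(\xi)\,\dif\xi
\]
is the extension operator attached to the hypersurface $S_{\a}=\{(\xi,|\xi|^{\a}):|\xi|\sim1\}$. Since $\a>1$, the Hessian of $|\xi|^{\a}$ has all $n$ eigenvalues of the same sign and comparable magnitudes on $|\xi|\sim1$, so $S_{\a}$ is elliptic with principal curvatures bounded away from zero and infinity; the geometric hypotheses behind the polynomial-partitioning machinery of \cite{GHI} are therefore available.

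The heart of the argument is to prove the $k$-broad counterpart of the previous inequality, namely a bound of the shape $\|Ef\|_{BL^p_{k,A}(B_R)}\lesssim R^{\sigma(\a,n,p)+\e}\|f\|_{L^p}$ valid for $p\ge\bar p(k,n)$, with $\bar p(k,n)$ the GHI broad-norm threshold, by induction on $R$ and on the number of caps. Given this bound, the broad--narrow decomposition at scale $K^{-1}$ splits $\|Ef\|_{L^p(B_R)}$ into a $k$-broad piece handled directly, and a narrow piece where wave packets concentrate in a $K^{-1}$-neighbourhood of a $(k-1)$-plane in $S_{\a}$. On each such plate, parabolic rescaling restores unit frequency and reduces the radius from $R$ to $K^{-2}R$, so the inductive hypothesis applies at a smaller scale; summing over plates and optimising $K$ and $\e$ closes the induction.

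The main obstacle is to match the GHI threshold $\bar p(k,n)$ with the sharp exponent $p=2+\tfrac{2}{n}$ of the conjecture: for small $k$ the value $\bar p(k,n)$ strictly exceeds $2+\tfrac2n$, so one must choose $k$ optimally (in practice close to $n$) and interpolate the $k$-broad estimate with Bourgain--Demeter $\ell^2$-decoupling for $S_{\a}$ in order to cover the whole range $p>2+\tfrac2n$. A secondary difficulty is that, unlike the paraboloid, $S_{\a}$ is not invariant under the affine symmetries that drive the usual parabolic rescaling; one must therefore track the dependence of implicit constants on the ratio of principal curvatures of $S_{\a}$ and verify that the polynomial-partitioning step of \cite{GHI} is insensitive to this non-affine behaviour of the phase.
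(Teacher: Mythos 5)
You are attempting to prove a statement that the paper itself records only as a \emph{conjecture}: the paper's own result, Theorem \ref{theoa1}, reaches only $p>2\tfrac{3n+4}{3n}$ ($n$ even) and $p>2\tfrac{3n+5}{3n+1}$ ($n$ odd), with the strict inequality $s>s_{\alpha,p}-\tfrac{\alpha}{p}$, and your proposal is essentially the same machinery the paper uses to get there (broad--narrow decomposition, the $k$-broad estimates of \cite{GHI}, lower-dimensional Bourgain--Demeter decoupling for the narrow part, parabolic rescaling and induction on scales). That machinery cannot reach the conjectured range $p>2+\tfrac2n$, and the place where your argument fails is exactly the step you describe as ``choose $k$ optimally (in practice close to $n$) and interpolate with $\ell^2$-decoupling.'' Concretely, the broad input \eqref{eq-22} is only available for $p\geq 2(n+k+1)/(n+k-1)$, while the narrow part, handled by the $(k-1)$-dimensional decoupling of Lemma \ref{ndec}, closes the induction on scales only when the exponent $e(p,k,n)$ is positive, i.e.\ $p>2(2n-k+4)/(2n-k+2)$. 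The first threshold is $\geq 2+\tfrac2n$ for every $2\le k\le n+1$, with equality only at $k=n+1$; but at $k=n+1$ the second threshold is $2(n+3)/(n+1)>2+\tfrac2n$. Minimizing the maximum of the two constraints over $k$ forces $k\approx (n+3)/2$ and reproduces precisely the range \eqref{eq-12} of Theorem \ref{theoa1} --- no choice of $k$, and no interpolation with decoupling, gets below it. This is why the paper's appendix singles out proving an estimate like \eqref{eq-36} for some $p$ \emph{below} the GHI threshold as the missing ingredient for further progress; your proposal does not supply it.

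Two further points. First, even granting the $\varepsilon$-losses, your scheme would at best give $s>s_{\alpha,p}-\tfrac{\alpha}{p}$; the conjecture asks for the endpoint regularity $s=\alpha n(\tfrac12-\tfrac1p)-\tfrac{\alpha}{p}$, and a Tao-type $\varepsilon$-removal argument for this local smoothing setting is not something you can simply cite --- the sharp-regularity results of Rogers--Seeger \cite{RS} required a genuinely different (bilinear, endpoint-adapted) analysis and still only cover $p>2+\tfrac{4}{n+1}$. Second, note that even for $\alpha=2$ the conjectured range is known only conditionally on the restriction conjecture (Rogers's implication, which uses the completing-the-square identity unavailable for $\alpha\neq2$), so a correct unconditional proof along the lines you sketch would in particular settle problems that are currently open; the gap is not a matter of bookkeeping but of missing core estimates.
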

When $\alpha=2$, which corresponds to the Schr\"odinger operator, Rogers \cite{Rogers} proposed this conjecture, and showed that it could be deduced from the restriction conjecture. To be more precise, for $q>2+\f{2}{n}$ with $p'=\f{nq}{n+2}$, the adjoint restriction estimate
  \beq\label{eq:res}
  \|e^{it\Delta}f\|_{L^q(\R^{n+1})}\leq \|\hat{f}\|_{L^p(\R^n)}
  \eeq
  will imply that
  \beq \label{eq-10}
\big\|e^{it\Delta}f\big\|_{L^q(\R^n\times [1,2])}\leq C_{q,s}\big\|f\big\|_{L^q_s(\R^n)}, \quad\; s>2 n\big(\tfrac{1}{2}-\tfrac{1}{q}\big)-\tfrac{2}{q}.
\eeq
The proof of the above implication relies deeply on the structure of the phase function and  the ``completing of square" trick.  Roughly speaking,  we may explicitly write $e^{it\Delta}f$ to be
\beq
e^{it\Delta}f(x)=\f{1}{(4\pi i t)^{\f{n}{2}}}\int_{\R^n} e^{i\f{|x-y|^2}{4t}}f(y)\dif y.
\eeq
Squaring out  $|x-y|^2$, we obtain
\beqq
\big|e^{it\Delta}f(x)\big|=\Big|\f{c^{\frac n2}}{t^{\frac n2}}e^{-i\f{c^2\Delta}{t}}\hat{f}\Big(\f{cx}{t}\Big)\Big|.
\eeqq
 This equality and  \eqref{eq:e1} enable us to express $e^{it\Delta}f$ freely in terms of spatial or frequency variables.  After some appropriate reductions and making use of the pseudo-conformal change of variables, one can obtain \eqref{eq-10} by \eqref{eq:res}. The above approach is, unfortunately, unavailable for the general fractional Schr\"odinger operators. Using the bilinear method, Rogers and Seeger \cite{RS} established the sharp local smoothing results for  $p>2+\frac{4}{n+1}$.  Away from the endpoint regularity, their results were further improved by Guo-Roos-Yung in \cite{Guo} by means of the Bourgain-Guth \cite{BG} iteration argument.  In this paper, motivated by the seminal work of Guth \cite{Guth}, up to the endpoint regularity, we further refresh the range of $p$ of  \cite{Guo, RS} by means of  weakened versions of the multilinear restriction estimates of Bennett-Carbery-Tao \cite{BCT06}, the so-called $k$-broad ``norm" estimates.
 \begin{theorem}\label{theoa1}
   Let $\alpha>1$,  $n\ge 1$ and $s>s_{\alpha,p}-\f{\alpha}{p}$ with
   \begin{equation}\label{eq-12} p>\left\{
     \begin{aligned}
    &2\tfrac{3n+4}{3n},\quad \;\text{\rm for $n$ even},\\
     &2\tfrac{3n+5}{3n+1},\quad\; \text{\rm for $n$ odd}.
     \end{aligned}\right.\end{equation}
   Then
   \beq\label{eq-13}
   \big\|e^{it(-\Delta)^{\f{\alpha}{2}}}f\big\|_{L^p(\R^n\times (1,2))}\leq C\|f\|_{L^p_s(\R^n)}.
   \eeq
 \end{theorem}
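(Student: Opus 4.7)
The plan is to follow the framework of Guth--Hickman--Iliopoulou \cite{GHI}, adapted to the extension operator attached to the phase $\phi(x,t;\eta):=x\cdot\eta+t|\eta|^\alpha$, reducing Theorem~\ref{theoa1} to an oscillatory integral bound on a ball of radius $R$ with an $R^\varepsilon$-loss that is then handled by the $k$-broad ``norm'' estimate combined with a Bourgain--Guth iteration.

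First, a Littlewood--Paley decomposition localizes $f$ to frequency $|\xi|\sim\lambda$; it then suffices to prove \eqref{eq-13} for each dyadic piece with a gain of $\lambda^{-\alpha/p}$ over the fixed-time bound~\eqref{eq-08}. An anisotropic rescaling $x\mapsto\lambda^{-1}x$, $t\mapsto\lambda^{-\alpha}t$ normalizes the frequency support to $|\eta|\sim 1$ and converts the problem to an $L^p(B_R)$-estimate, with $R=\lambda^{\alpha}$, for
\[
E_\phi g(x,t) = \int_{|\eta|\sim 1} e^{i(x\cdot\eta+t|\eta|^\alpha)}\,g(\eta)\,\dif\eta,
\]
and the sought gain $\lambda^{-\alpha/p+\varepsilon}$ corresponds precisely to an $R^\varepsilon$-loss in this bound.

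Next, one checks that $\phi$ fits the Hörmander/Bourgain--Guth framework. On $|\eta|\sim 1$ the Hessian $D^2_\eta|\eta|^\alpha$ has radial eigenvalue $\alpha(\alpha-1)|\eta|^{\alpha-2}$ and $n-1$ tangential eigenvalues $\alpha|\eta|^{\alpha-2}$, all strictly positive for $\alpha>1$, so the relevant convexity hypothesis holds. Hence the $k$-broad estimate of \cite{GHI} gives
\[
\|E_\phi g\|_{BL^p_{k,A}(B_R)} \leq C_\varepsilon R^\varepsilon \|g\|_{L^p(B^n(1))}
\]
in the range of $(p,k)$ identified there. The broad bound is upgraded to a full $L^p$ bound by the Bourgain--Guth iteration: on each small ball one either invokes the broad estimate directly, or decomposes $\{|\eta|\sim 1\}$ into $K^{-1}$-caps concentrated near some $(k{-}1)$-plane, rescales each cap, and closes by induction on $R$. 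Optimizing $k$ against $n$, with the odd/even distinction coming from where the $k$-linear Kakeya exponent in \cite{GHI} becomes sharp, yields the two thresholds in \eqref{eq-12}. Summing the dyadic blocks then recovers \eqref{eq-13} for every $s>s_{\alpha,p}-\alpha/p$.

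The main obstacle I expect lies in the narrow-case induction. The anisotropy of $D^2_\eta|\eta|^\alpha$ (radial versus tangential eigenvalues differing by the factor $\alpha-1$), which is absent when $\alpha=2$, forces the $K^{-1}$-caps to be slightly anisotropic, and one has to verify that the parabolically rescaled phase on each cap remains a Hörmander phase of the same convex type so that the $k$-broad hypothesis is preserved and the induction on $R$ actually closes. Once this geometric bookkeeping is carried out, the remaining numerology reduces to the same argument as in the paraboloid case treated in \cite{GHI}.
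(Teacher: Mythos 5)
Your reduction targets the wrong norm on the right-hand side, and this is precisely the difficulty the paper is built to overcome. After your rescaling you aim at an estimate for the extension operator $E_\phi g$ with $\|g\|_{L^p(B^n(1))}$ on the right, i.e.\ an $L^p$ norm of the \emph{frequency-side} density $g=\hat f$ (suitably rescaled). But Theorem \ref{theoa1} requires $\|f\|_{L^p_s(\R^n)}$, an $L^p$ norm in \emph{physical} space, and for $p>2$ there is no inequality of the form $\|\hat f\|_{L^p}\lesssim\|f\|_{L^p}$ even for frequency-localized $f$ (take $f$ a sum of $K$ widely separated modulated bumps with random signs: $\|f\|_{L^p}\sim K^{1/p}$ while $\|\hat f\|_{L^p}\sim K^{1/2}$). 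For $\alpha=2$ one can pass between the two sides by Rogers' ``completing the square''/pseudo-conformal identity \cite{Rogers}, but as the paper stresses in the introduction this identity is unavailable for general $\alpha>1$; and the cheap substitutes fail quantitatively: after localizing $f$ to the spatially relevant ball and using Hausdorff--Young/interpolation one gets $\|\hat f_{\rm loc}\|_{L^p}\lesssim \rho^{2n(\frac12-\frac1p)}\|f\|_{L^p}$ with $\rho$ the localization radius, which overshoots the admissible factor by a full power $\rho^{n(\frac12-\frac1p)}$ and destroys the sharp regularity $s>s_{\alpha,p}-\f{\alpha}{p}$. So your statement that the gain $\lambda^{-\alpha/p+\varepsilon}$ ``corresponds precisely'' to an $R^\varepsilon$-loss in the $E_\phi g$ bound is not correct with an $L^p(\dif\eta)$ right-hand side, and simply citing the linear theorem of \cite{GHI} (whose thresholds indeed coincide with \eqref{eq-12} for $d=n+1$) does not yield \eqref{eq-13}.

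What the paper actually does is rerun the whole broad--narrow/induction-on-scales scheme directly for the propagator $e^{it\phi(D)}$ acting on $f$, keeping $\|f\|_{L^p(\R^n)}$ on the right at every stage: the induction quantity $Q_1(R)$ in \eqref{eq-20} is defined with the physical-space norm, the $k$-broad input of \cite{GHI} (which has $\|\hat f\|_{L^2}$ on the right) is converted into the $L^p$-physical form of Corollary \ref{cor1} by spatial localization plus H\"older --- a conversion that is lossless exactly against the target factor $R^{2n(\frac12-\frac1p)}$ --- and the narrow part is closed by decoupling together with the parabolic rescaling Lemma \ref{pra}, which only works because the pseudo-locality property (Lemma \ref{lemma-1}, Corollary \ref{lemma-2}) lets one localize $f$ before rescaling. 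None of this machinery appears in your proposal, and without it the induction cannot be formulated with the norm the theorem demands. By contrast, the obstacle you single out --- the anisotropy of the Hessian of $|\eta|^\alpha$ --- is not the real issue: on the annulus the eigenvalues $\alpha|\eta|^{\alpha-2}$ and $\alpha(\alpha-1)|\eta|^{\alpha-2}$ are comparable to $1$, and after decomposing $A(1)$ into small pieces and applying affine changes of variables the phase is placed in the elliptic class ${\rm E}_1$, for which the broad estimate, the decoupling theorem and the rescaling lemma all apply uniformly; also, the even/odd thresholds in \eqref{eq-12} arise from optimizing over integer $k$ the balance between the broad exponent $2\frac{n+k+1}{n+k-1}$ and the narrow exponent $2\frac{2n-k+4}{2n-k+2}$, not from a sharpness feature of the $k$-linear Kakeya input.
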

\begin{remark}
   We recover the sharp local smoothing results for $n=1$  and improve the previously best-known results in \cite{Guo, RS} for $n\geq 3$. In particular, when $n=1,\alpha=2$, the above result follows from the known restriction theorem in $\R^2$ and Rogers's implication.
  \end{remark}

  The crucial observation is that away from the origin, the phase function $|\xi|^\alpha$ with $\alpha>1$ always has non-vanishing Gaussian curvature. This fact facilitates us to incorporate them all into a class of elliptic phase functions. A prototypical example for such class is the Schr\"odinger propagator, for which local smoothing estimates have extensive applications in various aspects. The Strichartz estimate, among other things,  plays a critical role in the study of  the semilinear Schr\"odinger equations.

 The purpose of this paper is to explore to what extent the current available methods and tools for studying the Fourier restriction can be applied to the local smoothing problems. This paper is organized as follows. In Section \ref{section-2}, we will provide some preliminaries and reductions.  In Section \ref{section-3}, we will prove Theorem \ref{theoa1}.  In the appendix, we will show some probable tractable approaches toward further improvement.

 \indent{\bf Notations.} For nonnegative quantities $X$ and $Y$, we will write $X\lesssim Y$ to denote the inequality $X\leq C Y$ for some $C>0$. If $X\lesssim Y\lesssim X$, we will write $X\sim Y$. Dependence of implicit constants on the spatial dimensions or integral exponents such as $p$ will be suppressed; dependence on additional parameters will be indicated by subscripts. For example, $X\lesssim_u Y$ indicates $X\leq CY$ for some $C=C(u)$. We write $A(R)\leq {\rm RapDec}(R) B$ to mean that for any power $\beta$, there is a constant $C_\beta$ such that
 \beqq
| A(R)|\leq C_\beta R^{-\beta}B \,\,\quad\text{for all}\,\; R\geq 1.
 \eeqq

For a spacetime slab $\R^n\times I$, we write $ L_x^rL_t^q(\R^n\times I)$ for the Banach space of functions $u:\R^n\times I\to\mathbb{C}$ equipped with the norm
    $$\|u\|_{L_x^rL_t^q(\R^n\times I)}:=\bigg(\int_{\R^n} \|u(x,\cdot)\|^r_{L_t^q(I)}\dif x\bigg)^{\frac1r},$$
with the usual adjustments when $q$ or $r$ is infinity. When $q=r$, we abbreviate $L_x^qL_t^q=L_{x,t}^q$.
We will also often abbreviate $\|f\|_{L_x^r(\R^n)}$ to $\|f\|_{L^r}$.  For $1\leq r\leq\infty$,
we use $r'$ to denote the dual exponent to $r$ such that $\tfrac{1}{r}+\tfrac{1}{r'}=1$. Throughout the paper, $\chi_E$ is the characteristic function of the set $E$.
We usually denote by $B_r^n(a)$ a ball in $\R^n$ with center $a$ and radius $r$. We will also denote by $B_R^n$ a ball of radius $R$ and arbitrary center in $\R^n$.  Denote by $A(r):=B_{2r}^n(0)\setminus B_{r/2}^n (0)$. We denote $w_{B^{n}_{R}(x_0)}$ to be a nonnegative weight function adapted to the ball $B^{n}_{R}(x_0)$ such that
$$ w_{B^{n}_{R}(x_0)}(x)\lesssim (1+R^{-1}|x-x_0|)^{-M},$$
for some large constant $M\in \mathbb{N}$.

We define the Fourier transform on $\mathbb{R}^n$ by
\begin{equation*}
\aligned \hat{f}(\xi):= \int_{\mathbb{R}^n}e^{-ix\cdot \xi}f(x)\,\dif x:=\mathcal{F}f(\xi).
\endaligned
\end{equation*}
and the inverse Fourier transform by
\beqq
\check{g}(x):=\f{1}{(2\pi)^n}\int_{\R^n} e^{ix\cdot \xi}g(\xi)\dif \xi:=(\mathcal{F}^{-1}g)(x).
\eeqq
These help us to define the fractional differentiation operators $\vert\nabla\vert^s$ and $\langle\nabla\rangle^s$ for $s\in \R$ via
    $$\vert\nabla\vert^s f(x):=\mathcal{F}^{-1}\big\{\vert\xi\vert^s\hat{f}(\xi)\big\}(x)\quad \text{and}\quad \langle\nabla\rangle^s f(x):=
   \mathcal{F}^{-1}\big\{ (1+|\xi|^2)^\frac{s}{2}\hat{f}(\xi)\big\}(x).$$
In this manner, we define the  Sobolev norm of the space $L^p_{\alpha}(\R^n)$  by
    $$\|f\|_{L^p_{\alpha}(\R^n)}:=\big\|\langle\nabla\rangle^\alpha f\big\|_{L^p(\R^n)}.$$

   Let  $\varphi$ be a radial bump function supported
on the ball $|\xi|\leq 2$ and equal to 1 on the ball $|\xi|\leq 1$. For $N\in 2^{\mathbb{Z}}$, we define the Littlewood--Paley projection operators by
\begin{align*}
&\widehat{P_{\leq N}f}(\xi) := \varphi(\xi/N)\widehat{f}(\xi),
 \\ &\widehat{P_{> N}f}(\xi) :=
(1-\varphi(\xi/N))\widehat{f}(\xi),
\\ &\widehat{P_{N}f}(\xi) :=
(\varphi(\xi/N)-\varphi({2\xi}/{N}))\widehat{f}(\xi).
\end{align*}

\section{preliminaries }\label{section-2}

Define the pseudo-differential operator $P$  by
\beqq
Pf(x):=\int_{\R^n}e^{ix\cdot\xi}p(x,\xi)\hat{f}(\xi)\dif \xi,
\eeqq
where the symbol $p(x,\xi)\in C^{\infty}(\R^n\times\R^n)$ satisfies
\beqq
\big|\partial_x^\alpha\partial_{\xi}^\beta  p(x,\xi)\big|\lesssim_{\alpha,\beta}(1+|\xi|)^{-|\beta|},\quad\forall\;\alpha,\;\beta\in \mathbb{N}^n.
\eeqq
It is well known that the pseudo-differential operator $P$ satisfies the following pseudo-locality property
\beq\label{eq:b1}
\int_{|x-x_0|\leq  1}|Pf(x)|^2 \dif x\lesssim_M \int_{\R^n}\frac{|f(x)|^2}{(1+|x-x_0|)^M}\dif x, \quad\; \text{\rm for}\; M\geq 0.
\eeq
One may refer to \cite[Chapter VI]{stein} for details.
Roughly speaking,   the main contribution of $Pf$ in the unit ball about $x_0$ comes from the values of $f(x)$ for  $x$ near that ball, in view of the rapidly decaying term $(1+|x-x_0|)^{-M}$. One may justify \eqref{eq:b1} through integration by parts. In particular, one has
\beqq
\chi_{B_r^n(x_0)}\cdot Pf(x)= P(\chi_{B_{2r}^n(x_0)}f)(x)+ {\rm RapDec}(r)\|f\|_{L^p},
\eeqq
for any $x_0\in\R^n$ and $1< p<\infty.$
 In this section, we will extend the pseudo-locality property to the operator $e^{it\phi(D)}$ given by
  \beqq
  e^{it\phi(D)}f(x):=\int e^{i(x\cdot \xi+t\phi(\xi))}\hat{f}(\xi)\dif \xi,
  \eeqq
  where the function $\phi$ belongs to a class of elliptic phase functions.
\begin{definition}[Elliptic phase functions]
  For a given $n$-tuple consisting of $n$ dyadic numbers $A=(A_1,\cdots,A_n)$, we say the smooth function
   $\phi$ is of  elliptic type ${\rm E}_{A}$, if ${\rm supp}\,\phi\subset B_1^n(0)$ and satisfies the following conditions
  \begin{itemize}
    \item $\phi(0)=0,\nabla\phi(0)=0$.
    \item  Let $0<\lambda_1\leq \cdots\leq \lambda_n $  be the eigenvalues of the Hessian $\big(\frac{\partial^2\phi}{\partial \xi_i\partial \xi_j}\big)_{n\times n}(\xi)$. For all $\xi \in {\rm supp}\,\phi$, $(\lambda_1,\cdots,\lambda_n)\in [A/2,A)$ by which we mean for each $1\leq i\leq n$, $\lambda_i\in [A_i/2,A_i)$.
  \end{itemize}
  \end{definition}

 Let  $\psi$ be a nonnegative smooth function on $\R^n$ such that
  \beq\label{poss}
  {\rm supp}\;\hat{\psi} \subset B_1^n(0),\;\sum_{\ell \in \mathbb{Z}^{n}} \psi(x-\ell)\equiv 1,\quad\; \forall\; x\in \R^n.
   \eeq
  Define  $\psi_{\ell}(x):= \psi(R^{-2}x-\ell)$ and  $f_\ell=\psi_\ell f$.
\begin{lemma}\label{lemma-1}  Assume $\phi\in {\rm E}_{1}=(1,\cdots,1)$ and ${\rm supp}\,\hat{f}\subset B_1^n(0)$.  Then, for any $\varepsilon>0$, there holds
  \beq\label{eq:2}
  |e^{it\phi(D)} f(x)|\lesssim_{\varepsilon}  \big|e^{it\phi(D)} \big(\Psi_{B_{R^{2+\varepsilon}}^n(x_0)}f\big)(x)\big|+{\rm RapDec(R)}\sum_{|\ell|>R^\varepsilon} \big\|f|\psi_\ell(\cdot-x_0)|^{\f{1}{2}}\big\|_{L^p(w_{B_{R^2}^n(x_0)})},
  \eeq
for $(x,t)\in B_{R^{2}}^{n}(x_0)\times [-R^2,R^2]$, $1< p<\infty$, where
\begin{align*}
\Psi_{B_{R^{2+\varepsilon}}^n(x_0)}(x):=&\sum_{|\ell |\leq  R^\varepsilon}\psi(R^{-2}(x-x_0)-\ell).
\end{align*}

\end{lemma}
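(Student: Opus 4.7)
The plan is to decompose $f$ via the partition of unity in \eqref{poss} and show that the ``far'' pieces contribute only a rapidly decaying error through a non-stationary-phase analysis of the propagator kernel. We write
\begin{equation*}
f = \Psi_{B_{R^{2+\varepsilon}}^n(x_0)}f + \sum_{|\ell|>R^\varepsilon} \psi_\ell(\cdot - x_0)\,f,
\end{equation*}
apply $e^{it\phi(D)}$, and use the triangle inequality; the first summand reproduces the first term on the right-hand side of \eqref{eq:2}. The task reduces to proving, for each fixed $|\ell|>R^\varepsilon$, each $x\in B_{R^2}^n(x_0)$, and each $|t|\le R^2$, a pointwise bound
\begin{equation*}
\big|e^{it\phi(D)}\big(\psi_\ell(\cdot - x_0) f\big)(x)\big|\lesssim \mathrm{RapDec}(R)\,\big\|f\,|\psi_\ell(\cdot - x_0)|^{1/2}\big\|_{L^p(w_{B_{R^2}^n(x_0)})}
\end{equation*}
with an implicit constant uniform in $\ell$; summation in $\ell$ then yields \eqref{eq:2}.

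To realize this, I represent $e^{it\phi(D)}$ as a kernel convolution. Since $\mathrm{supp}\,\hat\psi\subset B_1^n(0)$, the Fourier transform of $\psi_\ell(\cdot - x_0)$ lives in $B_{R^{-2}}^n(0)$, so by the frequency localization of $f$ we have $\widehat{\psi_\ell(\cdot - x_0)\,f}\subset B_2^n(0)$. Inserting a smooth bump $\chi$ equal to $1$ on $B_2^n(0)$ and supported in $B_4^n(0)$ gives
\begin{equation*}
e^{it\phi(D)}\big(\psi_\ell(\cdot - x_0)f\big)(x) = \int K_t(x-y)\,\psi_\ell(y-x_0)\,f(y)\,\dif y,\quad K_t(z):=\frac{1}{(2\pi)^n}\int e^{i(z\cdot\xi + t\phi(\xi))}\chi(\xi)\,\dif\xi.
\end{equation*}
Because $\phi\in\mathrm{E}_1$ has uniformly bounded Hessian on $\mathrm{supp}\,\chi$ and $\nabla\phi(0)=0$, one has $|\nabla\phi(\xi)|\lesssim 1$ on $\mathrm{supp}\,\chi$. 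For $|t|\le R^2$ this forces $|z+t\nabla\phi(\xi)|\ge |z|/2$ whenever $|z|\ge CR^2$ for a constant $C=C(\phi)$, and repeated integration by parts in $\xi$ yields $|K_t(z)|\lesssim_N (1+|z|)^{-N}$ for $|z|\ge CR^2$, while $|K_t(z)|\lesssim 1$ trivially.

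With these tools, Hölder's inequality with the weight pair $(w,w^{-p'/p})$, $w=w_{B_{R^2}^n(x_0)}$, reduces matters to showing
\begin{equation*}
\left(\int |K_t(x-y)|^{p'}|\psi_\ell(y-x_0)|^{p'/2}\,w(y)^{-p'/p}\,\dif y\right)^{1/p'}\lesssim \mathrm{RapDec}(R),
\end{equation*}
uniformly in $|\ell|>R^\varepsilon$. Rescaling $u=R^{-2}(y-x_0)$, I split the $u$-integral into $|u|\lesssim 1$ and $|u|\gtrsim 1$. In the former, $|u-\ell|\gtrsim R^\varepsilon$ forces the Schwartz bound $|\psi(u-\ell)|\lesssim_{M_\psi} R^{-\varepsilon M_\psi}$, while the weight contribution is only $O(1)$. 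In the latter, $|x-y|\gtrsim R^2|u|\ge CR^2$ activates the non-stationary-phase estimate $|K_t(x-y)|\lesssim_N (R^2|u|)^{-N}$, which defeats both the polynomial growth $|u|^{M_w p'/p}$ of $w^{-p'/p}$ and produces an integrable tail once $N$ is chosen large relative to the fixed weight exponent $M_w$. The main technical point is precisely this absorption: the decay exponents $M_\psi$ and $N$ are free parameters while $M_w$ is fixed in the definition of $w_{B_{R^2}^n(x_0)}$, so the effective smallness $R^{-\varepsilon}$ afforded by $|\ell|>R^\varepsilon$, together with the separation $|t\nabla\phi|\lesssim R^2 \ll R^{2+\varepsilon}$, ensures the non-stationary-phase regime fully covers the relevant support and beats any polynomial penalty from the weight.
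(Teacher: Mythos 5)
Your proposal is correct and follows essentially the same route as the paper: decompose $f$ via the partition of unity \eqref{poss}, write the far pieces through the kernel $K_t$ with a frequency cutoff, use the (non-)stationary phase bounds valid for $|t|\le R^2$, and apply H\"older to produce the weighted $L^p$ norms, with the rapid decay coming from $|\ell|>R^\varepsilon$ (and the polynomial Jacobian/volume factors absorbed into ${\rm RapDec}(R)$). The only cosmetic difference is the H\"older bookkeeping: the paper splits $|K_t|^{1/2}\cdot|K_t|^{1/2}$ and lets the kernel's own decay play the role of the weight $w_{B_{R^2}^n(x_0)}$, whereas you insert $w^{\pm 1/p}$ by hand, which---exactly as in the paper's own argument---implicitly uses that the adapted weight is comparable to $(1+R^{-2}|y-x_0|)^{-M}$ from below.
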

\begin{proof}
Without loss of generality, we may assume that $x_0=0$.  
 The general cases can be obtained by the following simple observation
\begin{equation*}
e^{it\phi(D)}f(x_0)=(e^{it\phi(D)}f(\cdot+x_0))(0).
\end{equation*}

We rewrite $e^{it\phi(D)} f$ by \eqref{poss}
  \beq\label{equ:eitadd}
 e^{it\phi(D)} f(x)=\sum_{\ell \in \mathbb{Z}^{n}} \int_{\R^n}\int_{\R^n}e^{i((x-y)\cdot \xi+t\phi(\xi))}\eta(\xi)f_\ell (y) \dif\xi\dif y,
  \eeq
  where $\eta(\xi)\in C_c^{\infty}(B_2^n(0))$ with $\eta(\xi)=1$ when $\xi\in B_1^n(0)$. The associated kernel  $K_t(\cdot)$ of the operator $e^{it\phi(D)}\eta(-i\nabla)$ is
  \beqq
  K_t(x)=\int_{\R^n} e^{i(x\cdot\xi+t\phi(\xi))}\eta(\xi)\dif \xi.
  \eeqq
  Note that $|t|\leq R^2$, by stationary phase argument, we obtain
  \beq\label{eq:e2}
  |K_t(x)|\leq C\chi_{|x|\leq CR^{2}}+ C_M\frac{\chi_{|x|\geq CR^{2}}}{(1+|x|)^M}.
  \eeq
   We decompose $e^{it\phi(D)}f(x)$ into two parts
  \begin{align}
  e^{it\phi(D)}f(x)&= \sum_{|\ell |\leq R^\varepsilon} e^{it\phi(D)}f_\ell(x)+\sum_{|\ell|> R^\varepsilon } e^{it\phi(D)}f_\ell(x)\nonumber\\
  &=e^{it\phi(D)}(\Psi_{B_{R^{2+\varepsilon}}^n(0)}f)(x)+\sum_{|\ell|> R^\varepsilon } e^{it\phi(D)}f_\ell(x).\label{eq:5}
  \end{align}
Now we turn to estimate the second term of the right-hand side of  \eqref{eq:5}.  By H\"older's inequality, we have
  \begin{align*}
  \Big|\sum_{|\ell|> R^\varepsilon } e^{it\phi(D)}f_\ell(x)\Big|=&\Big|\sum_{|\ell|>R^\varepsilon} \int_{\R^n} K_t(x-y)f_\ell(y)\dif y\Big|\\
  \le &\Big|\sum_{|\ell|>R^\varepsilon}\int_{\R^n} |K_t(x-y)|^{\f{1}{2}}|\psi_\ell(y)|^{\f{1}{2}}|\psi_\ell(y)|^{\f{1}{2}}|f(y)||K_t(x-y)|^{\f{1}{2}}\dif y\Big|\\
  \le &\sum_{|\ell|>R^\varepsilon}\Big(\int_{\R^n} |K_t(x-y)|^{\f{p'}{2}}|\psi_\ell(y)|^{\f{p'}{2}} \dif y\Big)^{\f{1}{p'}}\Big(\int_{\R^n} |\psi_\ell(y)|^{\f{p}{2}}|f(y)|^p|K_t(x-y)|^{\f{p}{2}} \dif y\Big)^{\f{1}{p}}.
  \end{align*}
  For $(x,t)\in B_{R^2}^{n}(0)\times [-R^2,R^2]$, using the rapidly decaying property of $K_t$ and $\psi$, we have
  \begin{align*}
  |K_t(x-y)\psi_\ell(y)|\lesssim_M\f{R^{-\varepsilon M}}{\big(1+|R^{-2}y-\ell|\big)^M}, \quad\; |\ell|>R^\varepsilon,\; \forall\; x\in B_{R^2}^n(0),\;y\in \R^n,
  \end{align*}
 and
 $$|K_t(x-y)|\lesssim_M  \f{1}{\Big(1+\f{|y|}{R^2}\Big)^{M/2}}, \;\; \forall  \;x\in B_{R^2}^n(0),\;y\in \R^n.$$
 Hence,
 \begin{align*}
   \Big|\sum_{|\ell|> R^\varepsilon } e^{it\phi(D)}f_\ell(x)\Big|\lesssim_M R^{-\varepsilon M+\frac{2n}{p'}}\sum_{|\ell|>R^\varepsilon}
   \big\|f|\psi_\ell|^{\f{1}{2}}\big\|_{L^p(w_{B_{R^2}^n(0)})}.
 \end{align*}
Therefore, we complete the proof.

\end{proof}

As a direct consequence  of Lemma \ref{lemma-1}, we immediately obtain the relation between local and global estimates in the spatial space.
\begin{corollary}\label{lemma-2}
  Let $\phi\in {\rm E}_1$, $s\in \mathbb{R}$, $2< p<\infty$ and $I$ be an interval with $I\subset [-R^2,R^2]$. Suppose that ${\rm supp}\,\hat{f}\subset B_1^n(0)$ and
  \beq\label{eq:7}
  \|e^{it\phi(D)}f\|_{L_{x,t}^p(B_{R^2}^n \times I)}\leq C R^s \|f\|_{L^p},
  \eeq
  then, $\forall \varepsilon>0$, there holds
   \beq\label{eq:3}
  \|e^{it\phi(D)}f\|_{L_{x,t}^p( \R^n\times I )}\lesssim_\varepsilon R^{s+\varepsilon} \|f\|_{L^p}.
  \eeq
\end{corollary}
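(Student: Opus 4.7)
The plan is to decompose $\R^{n}$ into a finitely overlapping cover by balls of radius $R^{2}$, apply Lemma \ref{lemma-1} on each ball to localize the action of $e^{it\phi(D)}$ to a slightly enlarged ball, use the hypothesized local estimate \eqref{eq:7} on each piece, and finally sum the pieces back together. The translation invariance of $e^{it\phi(D)}$ (it is a Fourier multiplier) ensures that \eqref{eq:7} holds on every ball $B_{R^{2}}^{n}(x_{k})$, not only on the one centered at the origin, with the same constant $CR^{s}$.

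Concretely, I would fix a lattice $\{x_{k}\}_{k\in\Z^{n}}\subset R^{2}\Z^{n}$ so that $\R^{n}=\bigcup_{k}B_{R^{2}}^{n}(x_{k})$ with bounded overlap. On each $B_{R^{2}}^{n}(x_{k})\times I$, Lemma \ref{lemma-1} gives the pointwise bound
\begin{equation*}
|e^{it\phi(D)}f(x)|\lesssim_{\varepsilon}|e^{it\phi(D)}(\Psi_{B_{R^{2+\varepsilon}}^{n}(x_{k})}f)(x)|+\mathrm{RapDec}(R)\sum_{|\ell|>R^{\varepsilon}}\big\|f|\psi_{\ell}(\cdot-x_{k})|^{1/2}\big\|_{L^{p}(w_{B_{R^{2}}^{n}(x_{k})})}.
\end{equation*}
Taking $L^{p}$ norms over $B_{R^{2}}^{n}(x_{k})\times I$ and applying the hypothesis \eqref{eq:7} (translated to the ball $B_{R^{2}}^{n}(x_{k})$) to the first term yields
\begin{equation*}
\|e^{it\phi(D)}f\|_{L^{p}_{x,t}(B_{R^{2}}^{n}(x_{k})\times I)}\lesssim_{\varepsilon} R^{s}\|\Psi_{B_{R^{2+\varepsilon}}^{n}(x_{k})}f\|_{L^{p}}+\mathrm{RapDec}(R)\|f\|_{L^{p}}.
\end{equation*}
Here I absorb the polynomial factors $R^{2(n+1)/p}$ coming from the volume of $B_{R^{2}}^{n}(x_{k})\times I$ into the $\mathrm{RapDec}(R)$ in the error term.

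Raising to the $p$-th power and summing over $k$ is the crux. Each $x\in\R^{n}$ lies in $B_{R^{2+\varepsilon}}^{n}(x_{k})$ for at most $\sim R^{n\varepsilon}$ lattice points $x_{k}$, since the centers are $R^{2}$-separated and the enlarged balls have radius $R^{2+\varepsilon}$. Therefore
\begin{equation*}
\sum_{k}\|\Psi_{B_{R^{2+\varepsilon}}^{n}(x_{k})}f\|_{L^{p}}^{p}\lesssim R^{n\varepsilon}\|f\|_{L^{p}}^{p},
\end{equation*}
which after taking the $p$-th root yields a multiplicative loss of $R^{n\varepsilon/p}$. Summing the error contribution over $k$ costs only a polynomial factor in $R$, still killed by $\mathrm{RapDec}(R)$.

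Combining these steps gives $\|e^{it\phi(D)}f\|_{L^{p}_{x,t}(\R^{n}\times I)}\lesssim_{\varepsilon} R^{s+n\varepsilon/p}\|f\|_{L^{p}}$; replacing $\varepsilon$ by $\varepsilon p/n$ at the outset produces \eqref{eq:3}. The only real obstacle is bookkeeping: verifying that $\mathrm{supp}\,\hat{f}\subset B_{1}^{n}(0)$ is preserved under the multiplication by $\Psi_{B_{R^{2+\varepsilon}}^{n}(x_{k})}$ is not needed here, because the hypothesis \eqref{eq:7} is only invoked at the level of $L^{p}$ norms of the localized input, and the Fourier support hypothesis enters only through Lemma \ref{lemma-1}, which requires the support condition on $\hat{f}$ (the original, not the localized) and hence is automatically satisfied. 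With this remark, the argument is essentially a clean covering/overlap computation.
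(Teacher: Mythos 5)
Your covering-and-overlap scheme is the same as the paper's, and your treatment of the main term (bounded overlap of the enlarged balls, a loss of $R^{n\varepsilon}$, then relabelling $\varepsilon$) is fine. The genuine gap is in the error term. You replace the error produced by Lemma \ref{lemma-1} on the ball $B_{R^2}^n(x_k)$ by $\mathrm{RapDec}(R)\|f\|_{L^p}$, i.e.\ by the \emph{global} norm of $f$, and then assert that summing over $k$ ``costs only a polynomial factor in $R$''. But the corollary is a global-in-space estimate: the cover consists of infinitely many balls, $k$ ranges over all of $\mathbb{Z}^n$, so $\sum_k\big(\mathrm{RapDec}(R)\|f\|_{L^p}\big)^p$ diverges and no rapidly decaying factor can save it. The reason the error term in Lemma \ref{lemma-1} is kept in the localized weighted form $\sum_{|\ell|>R^\varepsilon}\big\|f\,|\psi_\ell(\cdot-x_k)|^{1/2}\big\|_{L^p(w_{B_{R^2}^n(x_k)})}$ is precisely so that it can be summed over $k$: for fixed $\ell$ the bumps $\psi_\ell(\cdot-x_k)$ tile $\R^n$ as $k$ varies over the $R^2$-separated lattice, so Minkowski's inequality together with this separation lets one sum the $p$-th powers over $k$ and still land on $\mathrm{RapDec}(R)\|f\|_{L^p}^p$; this is exactly how the paper closes the argument. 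Discarding that localization before the $k$-summation, as you do, breaks the step; the fix is simply to carry the localized error through the summation.

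Your closing remark about the Fourier support is also backwards. The hypothesis \eqref{eq:7} is an estimate valid for inputs with $\mathrm{supp}\,\hat f\subset B_1^n(0)$, and you invoke it for the localized input $\Psi_{B_{R^{2+\varepsilon}}^n(x_k)}f$, whose Fourier support is slightly larger than $B_1^n(0)$ (it lies in a ball of radius $1+O(R^{-2})$, since each $\psi_\ell$ has Fourier support in a ball of radius $R^{-2}$). So the support condition must be checked exactly where you apply \eqref{eq:7}, not only in Lemma \ref{lemma-1}; the paper acknowledges this in a footnote and fixes it by subdividing $B_1^n(0)$ into finitely many smaller balls and applying the hypothesis on each piece. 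The issue is minor and easily repaired, but your stated justification (``the hypothesis enters only through Lemma \ref{lemma-1}, hence is automatically satisfied'') is incorrect as written.
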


\begin{proof}
  Let $\{B_{R^2}^n(x_k)\}_{k\in \mathbb{Z}^n}$ with $x_k=kR^2$ be a family of finitely overlapping balls which cover $\R^n$. Thus
  \beqq
   \big\|e^{it\phi(D)}f\big\|_{L_{x,t}^p(\R^n\times I)}^p \leq \sum_k  \big\|e^{it\phi(D)}f\big\|_{L_{x,t}^p(B_{R^2}^n(x_k) \times I)} ^p.
  \eeqq
Using  Lemma \ref{lemma-1}, we get
  \begin{align*}\label{eq:6}
  \big\|e^{it\phi(D)}f\big\|_{L_{x,t}^p ( B_{R^2}^n(x_k) \times I)}\lesssim_\varepsilon & \big\|e^{it\phi(D)}\big(\Psi_{B_{R^{2+\varepsilon/10n}}^n(x_k)}f\big)\big\|_{L_{x,t}^p (B_{R^2}^n(x_k) \times I)}\\
  &+{\rm RapDec}(R)\sum_{|\ell|>R^\varepsilon} \|f|\psi_\ell(\cdot-x_k)|^{\f{1}{2}}\|_{L^p(w_{B_{R^2}^n(x_k)})}.
  \end{align*}

We take the summation with respect to $k$, and obtain
  \begin{align}
\sum_k  \|e^{it\phi(D)}f\|_{L_{x,t}^p (B_{R^2}^n(x_k) \times I)}^p\lesssim_\varepsilon&  \sum_k \Big(\big\|e^{it\phi(D)}\big(\Psi_{B_{R^{2+\varepsilon/10n}}^n(x_k)}f\big)\big\|_{L_{x,t}^p (B_{R^2}^n(x_k)\times I)}\Big)^p\\
&+{\rm RapDec}(R)\sum_k \Big(\sum_{|\ell|>R^\varepsilon} \big\|f|\psi_\ell(\cdot-x_k)|^{\f{1}{2}}\big\|_{L^p(w_{B_{R^2}^n(x_k)})}\Big)^p.
  \label{eq:61}\end{align}
   It follows from \eqref{eq:7}\footnote{It should be noted that the Fourier support condition of $f$ may not be satisfied. However, it can be easily fixed by dividing $B_1^n(0)$ into several smaller balls and considering the estimate on each of these smaller balls.} and the bounded overlapping property of the balls $\{B_{R^2}^n(x_k)\}_k$ that
   \beqq
   \sum_k \Big(\big\|e^{it\phi(D)}\big(\Psi_{B_{R^{2+\varepsilon/10n}}^n(x_k)}f\big)\big\|_{L_{x,t}^p(B_{R^2}^n(x_k)\times I)}\Big)^p\lesssim R^{sp+\varepsilon p}\|f\|_p^p.
   \eeqq
  As for the error term, using Minkowski's inequality and the separation property of $x_k$, we obtain \begin{align*}
   &{\rm RapDec}(R)\sum_k \Big(\sum_{|\ell|>R^\varepsilon} \|f|\psi_\ell(\cdot-x_k)|^{\f{1}{2}}\|_{L^p(w_{B_{R^2}^n(x_k)})}\Big)^p\\
    \lesssim&{\rm RapDec}(R) \|f\|_{L^p}^p.
   \end{align*}
Thus we complete the proof of Corollary \ref{lemma-2}.
\end{proof}

For later use, we also need the following lemma concerning the eigenvalues of the Hessian matrix of a radially symmetric function. One can
 carry out the approach in \cite{CO} to obtain the following Lemma.
\begin{lemma}\label{le-3}
Let $\phi = \phi(|x|) $ be a radially symmetric $C^2$ function on $R^n \backslash \{0\}, n \geq 2$. Then the determinant of the Hessian matrix is
\beqq
{\rm det}\Big( \frac{\partial^2 \phi}{\partial x_i\partial x_j}\Big)_{n\times n}=\Big(\frac{\phi'(r)}{r}\Big)^{n-1}\phi''(r).
\eeqq
Furthermore, the eigenvalues of the Hessian matrix are
$$\underbrace{\frac{\phi'(r)}{r}}_{(n-1)-\text{fold}}, \quad \phi''(r).$$
\end{lemma}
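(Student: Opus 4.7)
The proof is a direct computation resting on the observation that the Hessian of a radial function is a rank-one perturbation of a scalar multiple of the identity. The plan is as follows.

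First I would compute the entries of the Hessian by the chain rule. Setting $r=|x|$, so that $\partial_i r = x_i/r$, one finds
\beqq
\partial_i\phi = \phi'(r)\,\frac{x_i}{r},
\qquad
\partial_i\partial_j\phi = \phi''(r)\,\frac{x_ix_j}{r^2} + \phi'(r)\,\frac{\delta_{ij}r^2 - x_ix_j}{r^3}.
\eeqq
Rearranging, the Hessian matrix can be written in the compact form
\beqq
H(x)=\frac{\phi'(r)}{r}\,I_n + \Bigl(\phi''(r)-\frac{\phi'(r)}{r}\Bigr)\,\hat{x}\hat{x}^{T},
\qquad \hat{x}:=x/r.
\eeqq

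The second step is the linear-algebra observation that $H(x)$ is the sum of a scalar matrix and a rank-one symmetric matrix with image $\mathrm{span}(\hat{x})$. Consequently, any unit vector $v$ orthogonal to $\hat{x}$ satisfies $H(x)v=(\phi'(r)/r)\,v$, giving the eigenvalue $\phi'(r)/r$ with multiplicity at least $n-1$ (coming from the tangent space of the sphere of radius $r$). The remaining eigenvector is $\hat{x}$ itself, for which
\beqq
H(x)\hat{x}=\frac{\phi'(r)}{r}\hat{x} + \Bigl(\phi''(r)-\frac{\phi'(r)}{r}\Bigr)\hat{x}=\phi''(r)\,\hat{x},
\eeqq
yielding the remaining eigenvalue $\phi''(r)$. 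This proves the eigenvalue statement.

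Finally, since the determinant equals the product of the eigenvalues counted with multiplicity, one concludes
\beqq
\det\Bigl(\frac{\partial^2\phi}{\partial x_i\partial x_j}\Bigr)_{n\times n}=\Bigl(\frac{\phi'(r)}{r}\Bigr)^{n-1}\phi''(r),
\eeqq
which is the claimed formula. There is no substantial obstacle here; the only care needed is in rewriting the mixed partials so that the rank-one structure is visible, after which the eigenvalue/determinant computation is immediate.
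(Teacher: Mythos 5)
Your proposal is correct and complete: the chain-rule computation of the entries, the rewriting of the Hessian as $\frac{\phi'(r)}{r}I_n+\bigl(\phi''(r)-\frac{\phi'(r)}{r}\bigr)\hat{x}\hat{x}^{T}$, and the resulting identification of the eigenvalues $\phi'(r)/r$ (on the orthogonal complement of $\hat{x}$, multiplicity $n-1$) and $\phi''(r)$ (on $\mathrm{span}(\hat{x})$) are all accurate, and the determinant formula follows as the product of eigenvalues. The paper itself gives no proof of this lemma, only a pointer to the approach of Cho--Ozawa, which rests on the same elementary computation of the Hessian of a radial function; your rank-one-perturbation formulation is a clean, self-contained version of exactly that argument, valid on $\R^n\setminus\{0\}$ where $r>0$, so nothing further is needed.
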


\section{Proof of Theorem \ref{theoa1}}\label{section-3}
This section is devoted to the proof of Theorem \ref{theoa1}.  Bourgain-Guth \cite{BG} have developed a strategy to convert $k-$linear into linear inequalities  in the context of the Fourier extension operators. In \cite{Guth}, Guth observed that full power of the $k-$linear inequality could  be replaced by a certain weakened version of the multilinear estimate for the Fourier extension operators known as $k-$broad ``norm" estimates. Following the  approach developed  by Guth in \cite{Guth}, we shall divide  $e^{it(-\Delta)^{\alpha/2}}f$
 into narrow and broad parts in the frequency space, and one part is around a neighborhood of $(k-1)$-dimensional subspace,
 another comes from its outside.
We estimate the contribution of the first part through the decoupling theorem and an induction on scales argument,  and then use the $k$-broad ``norm" estimates to handle the broad part. In this  process, we should take advantage of the pseudo-locality property of the fractional Schr\"{o}dinger operators.
It is worth noting that, for $\alpha>1$,  we can incorporate the phase functions $|\xi|^{\alpha}$ into the class of  ``elliptic phase functions".

In order to prove  Theorem \ref{theoa1}, we will use the following $k$-broad ``norm" estimates of Guth-Hickman-Iliopoulou \cite{GHI}. For some $0<\varepsilon\ll 1$, let $1\ll K\ll R^\varepsilon$. We assume that $\theta,\tau$ are balls in $\R^n$ of radius $R^{-1}$ and $K^{-1}$, respectively. Correspondingly, we define $G(\theta)$ and $G(\tau)$ to be the set of unit normal vectors as follows:
  $$G(\theta):=\bigg\{\frac{1}{\sqrt{1+|\nabla \phi|^2}}(-\nabla\phi(\xi),1): \xi \in \theta\bigg\},
\quad\;G(\tau):=\bigcup_{\theta\subset \tau}G(\theta).$$
  Let $V\subset \R^{n+1}$ be a $(k-1)$-dimensional subspace.  We denote by  ${\rm Ang}(G(\tau), V)$  the smallest angle between the non-zero vectors $v\in V$ and $v'\in G(\tau)$.

 Define $$ f_{\tau}:=\mathcal F^{-1}(\hat{f}\chi_{\tau}).$$
For each ball $B_{K^2}^{n+1}\subset B_{R^2}^{n}\times [-R^2,R^2]$, define
\beqq
\mu_{\phi}(B_{K^2}^{n+1}):=\min \limits_{V_1,\ldots, V_L}\max\limits_{\tau \notin V_\ell }\Big( \int_{B_{K^2}^{n+1}} |e^{it\phi(D)}f_\tau|^p\dif x\dif t\Big),
\eeqq
where $\tau \notin V_\ell$ means that for all $1\leq \ell \leq L$, ${\rm Ang}(G(\tau),V_{\ell})>K^{-1}$.

Let  $\{B_{K^2}^{n+1}\}$ be a collection of finitely overlapping balls which form a cover of $B_{R^2}^n\times [-R^2,R^2]$. In this setting, we define the $k$-broad ``norm" by
\beqq
\big\|e^{it\phi(D)}f\big\|_{{\rm BL}_{k,L}^p(B_{R^2}^n\times [-R^2,R^2])}^p:=\sum_{B^{n+1}_{K^2}\subset B_{R^2}^n\times [-R^2,R^2]} \mu_{\phi}(B^{n+1}_{K^2}).
\eeqq
\begin{theorem}[\cite{GHI}]\label{theo5}
  Let $2\leq k\leq n+1$ and $\phi\in {\rm E}_A$. There exists a large constant $L$ such that
  \beq\label{eq-22}
  \big\|e^{it\phi(D)}f\big\|_{{\rm BL}_{k,L}^p(B^{n}_R\times [-R,R])}\lesssim_{A,\varepsilon,L}R^\varepsilon \|\hat{f}\|_{L^2(B^{n}_1(0))}, \;{\rm supp}\,\hat{f}\subset B_1^{n}(0)
  \eeq
  for all $\varepsilon>0$ and $p\geq 2(n+k+1)/(n+k-1)$.
\end{theorem}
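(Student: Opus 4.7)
The plan is to follow the polynomial partitioning strategy introduced by Guth and refined in Guth--Hickman--Iliopoulou, based on a simultaneous induction on the spatial radius $R$ and on the $L^2$-mass $\|\hat f\|_{L^2}^2$. First I would normalize the elliptic phase $\phi\in {\rm E}_A$ by a linear rescaling in frequency that absorbs the eigenvalue tuple $A=(A_1,\ldots,A_n)$ into the choice of target domain; after this reduction the Hessian of $\phi$ is comparable to the identity, so that $e^{it\phi(D)}$ behaves like an adjoint Fourier extension operator attached to a hypersurface of elliptic type.

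Next I would perform a wave packet decomposition at scale $R$. Tiling $B_1^n(0)$ by caps $\theta$ of radius $R^{-1/2}$ and tiling physical space by dual tubes $T$ of dimensions $R^{1/2}\times\cdots\times R^{1/2}\times R$ oriented along the direction in $G(\theta)$, one writes $e^{it\phi(D)}f=\sum_{(\theta,T)}e^{it\phi(D)}f_{\theta,T}$ with the usual almost-orthogonality, the spatial concentration of each $e^{it\phi(D)}f_{\theta,T}$ in $T$, and the Fourier concentration in $\theta$. Thus matters reduce to controlling sums over these tubes.

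I would then apply polynomial partitioning. Picking a degree parameter $D=R^\delta$ with $\delta\ll\varepsilon$, one chooses a polynomial $P$ on $\R^{n+1}$ of degree $\lesssim D$ whose zero set $Z(P)$ divides $\R^{n+1}\setminus Z(P)$ into $\sim D^{n+1}$ open cells, each carrying roughly the same portion of the $k$-broad $L^p$-mass of $e^{it\phi(D)}f$. One then splits into a \emph{cellular} case, where the mass lives mostly inside the cells, and an \emph{algebraic} case, where it concentrates in a tubular neighborhood of $Z(P)$ of width $R^{1/2+\delta}$. In the cellular case, each wave packet tube meets $\lesssim D$ cells by B\'ezout, so each cell inherits a reduced $L^2$-mass, and the induction on $R$ combined with Minkowski over cells closes that case. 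In the algebraic case, I further split tubes into those transverse to $Z(P)$ and those tangent to it. The transverse tubes again enter only a few fattened cells and are controlled by the induction on $R$. The tangent tubes are confined to a tubular neighborhood of a lower-dimensional algebraic variety, and here the defining feature of the $k$-broad norm, namely that the contribution of any $(k-1)$-plane $V_\ell$ may be discarded, permits an induction on ambient dimension by invoking the $(k-1)$-broad estimate one dimension lower.

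The main obstacle will be the algebraic-tangential case: one must calibrate the parameters $K,D,\delta$ so that the gain from descending one dimension beats both the $R^\delta$ losses incurred by the polynomial partitioning and the overlapping losses from summing over the $\sim D$ cells a single tube may enter, while also checking that the phase induced on the tangent variety to $Z(P)$ remains of elliptic type $\rm E_{A'}$ for a new tuple $A'$, so that the inductive hypothesis applies at the smaller scale. Carefully optimizing this nested double induction in $(R,\|\hat f\|_2^2)$ together with the dimension reduction is precisely what yields the exponent threshold $p\geq 2(n+k+1)/(n+k-1)$.
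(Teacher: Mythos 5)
First, a point of orientation: the paper does not prove this statement at all — Theorem \ref{theo5} is imported verbatim from Guth--Hickman--Iliopoulou \cite{GHI}, and the authors only use it as a black box (via Corollary \ref{cor1}). So what you have written is not an alternative to anything in the paper; it is a sketch of the proof of the cited result itself. At the level of architecture your outline does track the actual GHI argument: normalization of the elliptic phase, wave packet decomposition at scale $R$, polynomial partitioning with degree $D=R^\delta$, the cellular/algebraic dichotomy, the transverse/tangent splitting, and a double induction on the radius and the $L^2$ mass.

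However, as a proof there are concrete gaps. (1) Your treatment of the tangential case is not how the argument closes, and as stated it would fail: one does not ``invoke the $(k-1)$-broad estimate one dimension lower.'' In Guth's scheme and in \cite{GHI} the induction in the tangential case is on the dimension $m$ of the algebraic variety $Z$, with $k$ held fixed; wave packets tangent to $Z$ are compared to an $m$-dimensional situation, and the base case is reached when $m=k-1$, where the defining min-max of the $k$-broad norm makes the contribution negligible because all tangent directions lie within $\sim K^{-1}$ of the $(k-1)$-dimensional tangent planes of $Z$. There is no recursion through $(k-1)$-broad norms. (2) The sharp threshold $p\geq 2(n+k+1)/(n+k-1)$ cannot be produced merely by ``calibrating $K,D,\delta$.'' The decisive quantitative ingredient in \cite{GHI} is the family of transverse equidistribution estimates: $L^2$ almost-orthogonality in the directions transverse to $Z$ yields an extra gain (a power of $\rho/R$ when descending from scale $R$ to scale $\rho$ along the variety), and it is exactly this gain, balanced against the cellular case, that generates the exponent $2(n+k+1)/(n+k-1)$ in ambient dimension $n+1$. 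Without that input, the bookkeeping you describe only reproduces strictly weaker exponents. Relatedly, the tangent wave packets live in an $R^{1/2+\delta}$-neighborhood of $Z$, not on a genuinely lower-dimensional variety, so making the dimensional descent rigorous (the notion of tangency, the induced data on $Z$, and uniformity of the elliptic-type hypotheses for the rescaled phases, with constants depending only on $A$) is itself a substantial part of the proof that your sketch leaves open. If your goal is simply to justify the theorem for this paper's purposes, the correct move is the one the authors make: observe that for $\phi\in\mathrm{E}_A$ the phase $(x,t)\mapsto x\cdot\xi+t\phi(\xi)$ is a Hörmander-type phase with positive-definite Hessian in $\R^{n+1}$ and cite the $k$-broad theorem of \cite{GHI} directly.
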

As a direct consequence of Theorem \ref{theo5},  we obtain
\begin{corollary}\label{cor1}
  Let $2\leq k\leq n+1$, $\varepsilon>0$ and $\phi\in {\rm E}_A$. There is a large constant $L$ such that
  \beq\label{eq-23}
  \|e^{it\phi(D)}f\|_{{\rm BL}_{k,L}^p(B^{n}_{R^2}\times [-R^2,R^2])}\lesssim_{A, \varepsilon,L}R^{2 n(\f{1}{2}-\f{1}{p})+\varepsilon} \|f\|_{L^p(\R^n)},
  \eeq
  for  all $f$  with  ${\rm supp}\;\hat{f}\subset B_1^{n}(0)$ and $p\geq  2(n+k+1)/(n+k-1)$.
\end{corollary}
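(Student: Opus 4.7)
The plan is to derive Corollary \ref{cor1} from Theorem \ref{theo5} by (i) applying the latter at scale $R^2$ rather than $R$, and (ii) using the pseudo-locality result of Lemma \ref{lemma-1} to convert the $L^2(B_1^n)$ bound on $\widehat{f}$ into an $L^p(\R^n)$ bound on $f$, at the cost of a volume factor from a physical ball of radius $R^{2+\e}$.

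The first reduction is to localize $f$ in physical space. Since we only see $e^{it\phi(D)}f$ on $B_{R^2}^n \times [-R^2,R^2]$, Lemma \ref{lemma-1} with $x_0 = 0$ yields, pointwise on that spacetime region,
\[
|e^{it\phi(D)} f(x)| \lesssim |e^{it\phi(D)} g(x)| + \mathrm{RapDec}(R)\,\|f\|_{L^p},
\]
where $g := \Psi_{B_{R^{2+\e}}^n(0)}f$ is essentially supported in $B_{R^{2+\e}}^n(0)$. I apply the same reduction to each frequency-cap piece $f_\tau$ (which still has $\widehat{f_\tau}\subset B_1^n(0)$); multiplication by $\Psi$ enlarges the Fourier support of $f_\tau$ by only $R^{-2} \ll K^{-1}$, which does not affect the broad-angle condition $\mathrm{Ang}(G(\tau),V)>K^{-1}$ in the definition of $\|\cdot\|_{\mathrm{BL}^p_{k,L}}$. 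Hence one may harmlessly replace $f_\tau$ with $\Psi f_\tau$ in the definition of $\mu_\phi$, up to a negligible $\mathrm{RapDec}$ error.

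Next, $\widehat{g} = \widehat{\Psi} * \widehat{f}$ is supported in $B_{1+R^{-2}}^n(0)$, which after a trivial finite decomposition (or mild rescaling that does not alter the ellipticity class) fits the hypothesis of Theorem \ref{theo5}. Applying that theorem with $R$ replaced by $R^2$ gives
\[
\|e^{it\phi(D)}g\|_{\mathrm{BL}^p_{k,L}(B_{R^2}^n\times [-R^2,R^2])} \lesssim R^{2\varepsilon}\|\widehat{g}\|_{L^2} \sim R^{2\varepsilon}\|g\|_{L^2}.
\]
To pass from $\|g\|_{L^2}$ to $\|f\|_{L^p}$, note that $\|\Psi\|_\infty\lesssim 1$ with essential support of volume $\sim R^{n(2+\e)}$, so $\|\Psi\|_{L^q} \lesssim R^{(2+\e)n/q}$ for every $q$. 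Taking $q = 2p/(p-2)$ and applying H\"older,
\[
\|g\|_{L^2} \leq \|\Psi\|_{L^{2p/(p-2)}}\|f\|_{L^p} \lesssim R^{(2+\e)n(\f12 - \f1p)}\|f\|_{L^p}.
\]
Collecting these bounds gives the exponent $2n(\f12 - \f1p) + O(\e)$; relabeling $\e$ yields the corollary.

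The main obstacle is technical: compatibility of the pointwise locality bound with the min-max structure of $\|\cdot\|_{\mathrm{BL}^p_{k,L}}$. One must apply Lemma \ref{lemma-1} to each $f_\tau$ in the definition of the broad norm, verify that the Fourier-support enlargement by $R^{-2}$ is negligible against the broad-angle threshold $K^{-1}$, and absorb the $\mathrm{RapDec}(R)$ error via a trivial pointwise estimate on the $K^2$-scale balls used to define $\mu_\phi$. Once this bookkeeping is in place, the remainder of the argument is routine.
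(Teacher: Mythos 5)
Your proposal is correct and follows essentially the same route as the paper: apply the pseudo-locality Lemma \ref{lemma-1} to replace $f$ by $\Psi_{B_{R^{2+\varepsilon}}^n}f$ up to a ${\rm RapDec}(R)$ error, invoke Theorem \ref{theo5} at scale $R^2$, and pass from $\|\hat g\|_{L^2}$ to $\|f\|_{L^p}$ by H\"older with the volume factor $R^{(2+\varepsilon)n(\frac12-\frac1p)}$. In fact you spell out the cap-by-cap compatibility with the ${\rm BL}^p_{k,L}$ min-max structure (the $R^{-2}\ll K^{-1}$ Fourier-support enlargement) more explicitly than the paper does.
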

\begin{proof}
It follows from \eqref{eq:2}  that for $(x,t)\in B^{n}_{R^2}\times [-R^2,R^2]$
  \begin{align*}
  e^{it\phi(D)}f(x)&=\int_{\R^n}e^{i(x\cdot \xi+t\phi(\xi))}\eta(\xi)\hat{f}(\xi)\dif \xi\\
  &=\int_{\R^n}e^{i(x\cdot \xi+t\phi(\xi))}\eta(\xi)\widehat{\big(\Psi_{B_{R^{2+\varepsilon/10n}}^n}f\big)}(\xi)\dif \xi+{\rm RapDec}(R)\|f\|_p,
   \end{align*}
   where $\eta(\xi)$ is the same as in \eqref{equ:eitadd}.
   By Theorem \ref{theo5} and  H\"older's inequality, we  obtain the desired estimate.
\end{proof}

We also need the following decoupling theorem due to Bourgain-Demeter \cite{BoDe2015}.
\begin{theorem}[Decoupling theorem]\label{decthe} Let $\phi\in {\rm E}_A$, then
\beq\label{eq-24}
\Big\|\sum_{\tau}e^{it\phi(D)}f_\tau\Big\|_{L^p(B^{n+1}_{K^2})}\lesssim_{A,\delta} K^{n(\frac12-\frac1p)+\delta}\Big(\sum_{\tau}\|e^{it\phi(D)}f_\tau\|_{L^p(w_{B^{n+1}_{K^2}})}^p\Big)^{\frac1p},
\eeq
 for $2\leq p\leq \frac{2(n+2)}{n}$ and $\delta>0$.
\end{theorem}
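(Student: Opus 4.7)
The plan is to deduce the stated $\ell^p$-decoupling inequality from the classical $\ell^2$-decoupling theorem of Bourgain--Demeter for compact $C^2$ hypersurfaces with positive-definite second fundamental form (the paraboloid being the prototype). Since $\phi\in {\rm E}_A$ has Hessian with eigenvalues componentwise bounded between $A/2$ and $A$, the graph $\{(\xi,\phi(\xi)):\xi\in\mathrm{supp}\,\phi\}$ is a uniformly elliptic hypersurface of $\R^{n+1}$, so after a linear normalization the hypotheses of the Bourgain--Demeter theorem are met.

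Concretely, I first perform an orthogonal change of frequency variables to diagonalize $D^2\phi(0)$ and then rescale each coordinate by $\sqrt{A_i}$, producing a normalized phase $\tilde\phi$ whose Hessian is comparable to the identity on its support. A $K^{-1}$-ball $\tau$ becomes a $K^{-1}$-ellipsoid, which is covered by $O_A(1)$ genuine $K^{-1}$-balls; the spacetime ball $B^{n+1}_{K^2}$ becomes a comparable ellipsoid covered by $O_A(1)$ balls at the same scale, and the Schwartz weight $w_{B^{n+1}_{K^2}}$ pulls back to a weight of the same rapid-decay type up to $A$-dependent constants. In these normalized variables Bourgain--Demeter yields the $\ell^2$-decoupling
\[
\Big\|\sum_{\tilde\tau} e^{it\tilde\phi(D)}g_{\tilde\tau}\Big\|_{L^p(B^{n+1}_{K^2})}\lesssim_\delta K^\delta\Big(\sum_{\tilde\tau}\|e^{it\tilde\phi(D)}g_{\tilde\tau}\|_{L^p(w_{B^{n+1}_{K^2}})}^2\Big)^{1/2},\qquad 2\leq p\leq \tfrac{2(n+2)}{n}.
\]
To convert this into the $\ell^p$-form of the statement, I apply H\"older's inequality in the discrete sum: the number of $K^{-1}$-balls covering $B_1^n(0)$ is $\simeq K^n$, so $(\sum_\tau a_\tau^2)^{1/2}\leq K^{n(1/2-1/p)}(\sum_\tau a_\tau^p)^{1/p}$ for $p\geq 2$. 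Applying this with $a_\tau=\|e^{it\phi(D)}f_\tau\|_{L^p(w_{B^{n+1}_{K^2}})}$ produces exactly the factor $K^{n(1/2-1/p)+\delta}$ in the claim.

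The only point requiring genuine attention is the bookkeeping associated with the anisotropic rescaling: one has to verify that the refined cover $\tilde\tau\supset\tau$ and the pulled-back Schwartz weight behave consistently on both sides of the inequality, and that the compensating dilation of the spatial variable (or time variable) absorbed during the normalization does not alter the geometry of $B^{n+1}_{K^2}$ beyond $A$-dependent constants. No step carries analytic depth beyond what is packaged into the Bourgain--Demeter theorem itself; every loss incurred during the normalization is harmlessly absorbed into the implicit constant $C_{A,\delta}$.
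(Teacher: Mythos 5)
Your proposal is correct and amounts to exactly the derivation the paper has in mind: the paper gives no proof of Theorem~\ref{decthe}, simply quoting Bourgain--Demeter, and the intended reading is your reduction --- normalize $\phi\in{\rm E}_A$ so its graph is a compact hypersurface with positive-definite second fundamental form, apply the $\ell^2$-decoupling into $K^{-1}$-caps on balls of radius $K^2$ in $\R^{n+1}$ (whose admissible range $2\le p\le \tfrac{2(n+2)}{n}$ is precisely the range stated), and then pass to the $\ell^p$ right-hand side by H\"older over the $\sim K^{n}$ caps, which yields the factor $K^{n(\frac12-\frac1p)}$. The bookkeeping you flag (refined caps, pulled-back Schwartz weights, anisotropic normalization) only costs $A$-dependent constants and is absorbed into $\lesssim_{A,\delta}$, as you say.
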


As a consequence of Theorem \ref{decthe}, we have
\begin{lemma}\label{ndec} Let $\phi\in {\rm E}_A$ and
 $V\subset \R^{n+1}$ be a $(k-1)$-dimensional linear subspace, then
\beqq
\Big\|\sum_{\tau\in V}e^{it\phi(D)}f_\tau\Big\|_{L^p(B^{n+1}_{K^2})}\lesssim_{A,\delta} K^{(k-2)(\frac12-\frac1p)+\delta}\Big(\sum_{\tau\in V}\|e^{it\phi(D)}f_\tau\|_{L^p(w_{B^{n+1}_{K^2}})}^p\Big)^{\frac1p},
\eeqq
 for $2\leq p\leq \frac{2k}{k-2}$ and $\delta>0$.
Here the sum is taken over all caps $\tau$ for which ${\rm Ang}(G(\tau),V)\leq K^{-1}$.
\end{lemma}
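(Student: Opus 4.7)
The plan is to reduce Lemma \ref{ndec} to Theorem \ref{decthe} applied in ambient dimension $k-1$, after observing that the angular restriction ${\rm Ang}(G(\tau),V)\le K^{-1}$ traps the admissible caps in an $O(K^{-1})$-tube around a $(k-2)$-dimensional elliptic subvariety of $\xi$-space. Heuristically, one dimension of $V$ is consumed by the ``vertical'' direction in $\R^{n+1}$, and the remaining $k-2$ dimensions prescribe how the caps cluster inside frequency space; the relevant piece of the graph of $\phi$ therefore behaves like a $(k-2)$-dimensional elliptic hypersurface embedded in $\R^{k-1}$, which is precisely the setting in which Bourgain--Demeter decoupling produces the exponent $(k-2)(\tfrac12-\tfrac1p)+\delta$ and the critical exponent $p=2k/(k-2)$.

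First I would normalize $V$ by an affine change of coordinates in $\R^{n+1}$. Since every $v\in G(\tau)$ has a lower-bounded $(n+1)$-st component, $V$ may be assumed to contain the vertical axis $e_{n+1}$ and to project onto ${\rm span}(e_1,\ldots,e_{k-2})\subset\R^n$. Splitting $\xi=(\xi',\xi'')\in\R^{k-2}\times\R^{n-k+2}$, the angle condition reads $|\nabla_{\xi''}\phi(\xi)|\lesssim K^{-1}$ on $\tau$. Because $\phi\in {\rm E}_A$, the block $\partial^2_{\xi''\xi''}\phi$ is uniformly invertible, so the implicit function theorem furnishes a smooth map $\Phi:\R^{k-2}\to\R^{n-k+2}$ with $\nabla_{\xi''}\phi(\xi',\Phi(\xi'))=0$; each admissible $\tau$ is then contained in the $O(K^{-1})$-neighbourhood of the graph $\Sigma=\{(\xi',\Phi(\xi'))\}$, and the restricted phase $\tilde\phi(\xi'):=\phi(\xi',\Phi(\xi'))$ inherits a non-degenerate Hessian from the ellipticity of $\phi$.

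Next, expanding $\phi$ around $\Sigma$ by Taylor's formula, the quadratic contribution in the transverse direction $\xi''-\Phi(\xi')$ produces phase oscillation on the time scale $K^2$ and hence contributes only bounded factors on the ball $B^{n+1}_{K^2}$. Integrating out the $n-k+2$ spatial variables dual to $\xi''$ via Minkowski's inequality and the rapid decay of $w_{B^{n+1}_{K^2}}$ reduces the desired estimate to an $\ell^p$-decoupling inequality on $\R^{k-1}$ for the elliptic hypersurface parameterized by $\tilde\phi$. Applying Theorem \ref{decthe} with $n$ replaced by $k-2$ yields the loss $K^{(k-2)(1/2-1/p)+\delta}$ in exactly the range $2\le p\le 2k/(k-2) = 2((k-2)+2)/(k-2)$, as claimed.

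The main obstacle is the careful bookkeeping in this final reduction: one must check that the transverse integration is indeed harmless (so that no constants of size a power of $K$ creep in), that the normalized phase $\tilde\phi$ belongs to ${\rm E}_{\tilde A}$ for some tuple $\tilde A$ depending only on $A$ (so the hypotheses of Theorem \ref{decthe} are genuinely met in the lower dimension), and that the weights $w_{B^{n+1}_{K^2}}$ transform correctly under the affine change of coordinates. All of these verifications are routine consequences of the quantitative ellipticity built into the definition of ${\rm E}_A$, but they are the places where a clean proof stands or falls.
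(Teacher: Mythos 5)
Your plan is the standard proof of this ``narrow decoupling'' and it is, in substance, the argument the paper itself points to: the paper offers no proof of Lemma \ref{ndec}, deferring to \cite{Guth}, where the paraboloid case is handled by exactly your reduction --- the tangential caps lie in a $K^{-1}$-neighbourhood of a $(k-2)$-dimensional piece of the surface, the relevant part of the graph of $\phi$ agrees to within $O(K^{-2})$ with a cylinder over a $(k-2)$-dimensional elliptic hypersurface (harmless on $B^{n+1}_{K^2}$), and the lower-dimensional $\ell^p$ decoupling (Theorem \ref{decthe} with $n$ replaced by $k-2$) is imported through Fubini and Minkowski. Your implicit Schur-complement argument for the ellipticity of $\tilde\phi$ and the exponent bookkeeping $2\le p\le 2((k-2)+2)/(k-2)=2k/(k-2)$ are correct.

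The one step whose justification fails as written is the normalization of $V$. The fact that every $v\in G(\tau)$ has $(n+1)$-st component bounded below does not allow you to assume $e_{n+1}\in V$: a generic $(k-1)$-plane, e.g.\ $V={\rm span}(e_1,\dots,e_{k-2},e_{k-1}+e_{n+1})$, contains no vertical vector at all, and your rewriting of the angle condition as $|\nabla_{\xi''}\phi(\xi)|\lesssim K^{-1}$ relies on that normalization in an essential way. What is true is that the lemma is vacuous unless $V$ contains a vector $(u_0,1)$ with $|u_0|\lesssim 1$, since each element of $G(\tau)$ has vertical component $\gtrsim_A 1$; writing $V=\bigl(V\cap\{x_{n+1}=0\}\bigr)\oplus\R(u_0,1)$ and rotating the horizontal coordinates so that the first summand is ${\rm span}(e_1,\dots,e_{k-2})$, the angle condition becomes $|\nabla_{\xi''}\phi(\xi)+u_0''|\lesssim K^{-1}$. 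You must therefore run the implicit function theorem on the level set $\nabla_{\xi''}\phi=-u_0''$ rather than $\nabla_{\xi''}\phi=0$; the resulting linear term $-u_0''\cdot(\xi''-\Phi(\xi'))$ in the transverse Taylor expansion shears the cylinder, and is removed either by the bounded linear change of variables in spacetime dual to the frequency shear (equivalently, by conjugating with $(x,t)\mapsto(x+tu_0,t)$ at the outset), under which the decoupling inequality, the class of elliptic phases, and the weights $w_{B^{n+1}_{K^2}}$ are all stable up to admissible constants. With this correction, and with the routine verifications you already flag (the $O(K^{-2})$ transverse error, the Fubini--Minkowski cylindrical step for the $\ell^p$-form of decoupling, and the $O(1)$ multiplicity of caps $\tau$ over each $\xi'$-cap), your sketch does fill in the proof that the paper leaves to the citation.
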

For the proof of  Lemma \ref{ndec}, one may refer to \cite{Guth} for the details.

\noindent{\bf Parabolic rescaling}\;
For given  $\phi\in {\rm E}_A$,  we denote $Q_A(R)$ to be the optimal  constant such that
\beq\label{eq-20}
\|e^{it\phi(D)}f\|_{L^p_{x,t}(B_{R^2}^{n}\times [-R^2,R^2])}\leq Q_A(R)R^{2n(\frac{1}{2}-\frac{1}{p})}\|f\|_{L^p(\R^n)},\quad\;
\text{\rm supp} \,\hat{f}\subset B_{1}^n (0).
\eeq

The parabolic rescaling  transformation establishes the bridge among the estimates at different scales,  which enables us to use an induction on scales argument. We utilize the pseudo-locality property of the propagator $e^{it\phi(D)}$ to establish a parabolic rescaling in  our setting.
\begin{lemma}[Parabolic rescaling]\label{pra}
 Suppose that $\phi\in {\rm E}_1$, and $\tau\subset  \R^n$ is a ball with radius $K^{-1}$, then for $ 0<\varepsilon\ll 1$, we have
  \begin{equation}\label{add-03}
  \big\|e^{it\phi(D)}f_\tau\big\|_{L^p(B^{n}_{R^2}\times [-R^2,R^2])}\leq C(\varepsilon) K^{-2n(\frac{1}{2}-\frac{1}{p})+\frac{2}{p}-\varepsilon }Q_1\Big(\frac{R}{K}\Big)R^{2n(\frac{1}{2}-\frac{1}{p})+\varepsilon}\|f_\tau\|_{L^p}+{\rm RapDec(R)}\|f\|_{L^p}.
  \end{equation}
\end{lemma}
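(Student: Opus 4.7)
The strategy is a standard parabolic rescaling adapted to the elliptic setting, with the pseudo-locality property of Corollary~\ref{lemma-2} deployed to handle the spatial domain mismatch that appears after rescaling. Let $\xi_0$ denote the center of $\tau$. In the oscillatory representation of $e^{it\phi(D)}f_\tau(x)$, substitute $\xi = \xi_0 + K^{-1}\eta$ with $\eta \in B_1^n(0)$ and Taylor-expand
\[
\phi(\xi_0 + K^{-1}\eta) = \phi(\xi_0) + K^{-1}\nabla\phi(\xi_0)\cdot \eta + K^{-2}\tilde{\phi}(\eta),
\]
with $\tilde{\phi}(\eta) := K^{2}\bigl[\phi(\xi_0+K^{-1}\eta)-\phi(\xi_0)-K^{-1}\nabla\phi(\xi_0)\cdot \eta\bigr]$. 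A direct check gives $\tilde\phi(0)=0$, $\nabla\tilde\phi(0)=0$, and $H_{\tilde\phi}(\eta)=H_\phi(\xi_0+K^{-1}\eta)$, so (after a harmless smooth cutoff enforcing the support condition) $\tilde\phi \in \mathrm{E}_{1}$.

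Next I define $g$ by $\hat g(\eta) := K^{-n}\hat f_\tau(\xi_0 + K^{-1}\eta)$, which is supported in $B_1^n(0)$ and satisfies $\|g\|_{L^p} = K^{-n/p}\|f_\tau\|_{L^p}$. Introducing the spacetime rescaling
\[
\tilde{x} := K^{-1}\bigl(x + t\,\nabla\phi(\xi_0)\bigr), \qquad \tilde{t} := K^{-2}t,
\]
and absorbing the constant and linear terms from the expansion of $\phi$ into modulation factors, one obtains the pointwise identity $|e^{it\phi(D)}f_\tau(x)| = |e^{i\tilde{t}\tilde\phi(D)}g(\tilde{x})|$. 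Changing variables in the $L^p$ norm yields a Jacobian factor of $K^{(n+2)/p}$, and the image of $B_{R^2}^{n}\times[-R^2,R^2]$ under $(x,t)\mapsto(\tilde x,\tilde t)$ is contained in $B_{CR^2/K}^{n}\times[-(R/K)^2,(R/K)^2]$.

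The crucial point is that the spatial ball here has radius $\sim K\cdot (R/K)^2$, a factor of $K$ larger than the ball matching the time scale $(R/K)^2$ that governs the definition of $Q_1(R/K)$. To close this gap, I invoke Corollary~\ref{lemma-2} at the scale $R/K$, applied to the phase $\tilde\phi \in \mathrm{E}_1$ and datum $g$ whose Fourier support lies in $B_1^n(0)$, to produce
\[
\|e^{i\tilde t\tilde\phi(D)}g\|_{L^p(\R^n\times[-(R/K)^2,(R/K)^2])}\lesssim_\varepsilon Q_1(R/K)\,(R/K)^{2n(1/2-1/p)+\varepsilon}\|g\|_{L^p}.
\]
Substituting back and using $\|g\|_{L^p}=K^{-n/p}\|f_\tau\|_{L^p}$, the net power of $K$ consolidates to $\tfrac{n+2}{p}-\tfrac{n}{p}-2n(\tfrac12-\tfrac1p)-\varepsilon = -2n(\tfrac12-\tfrac1p)+\tfrac{2}{p}-\varepsilon$, which is exactly the exponent in \eqref{add-03}. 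The RapDec$(R)\|f\|_{L^p}$ term inherits from the rapidly decaying tails produced by Lemma~\ref{lemma-1} inside the proof of Corollary~\ref{lemma-2}; these are bounded by the full norm $\|f\|_{L^p}$ rather than $\|f_\tau\|_{L^p}$ because the Littlewood--Paley style decomposition there is spatial and blind to the frequency localization in $\tau$.

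The main obstacle will be justifying the cleanest version of the $\tilde x$-domain mismatch: a naive tiling of $B_{CR^2/K}^n$ by balls of radius $(R/K)^2$ costs an unacceptable $K^{n/p}$ factor, which would destroy the sharpness of the $K$-exponent. The pseudo-locality estimate is tailor-made to absorb this mismatch into only an $R^\varepsilon$ loss plus a rapidly decaying remainder, and the care is in verifying that the hypotheses of Corollary~\ref{lemma-2} (elliptic type, Fourier support in the unit ball, time interval contained in $[-(R/K)^2,(R/K)^2]$) are all met by the rescaled data $g$ and phase $\tilde\phi$.
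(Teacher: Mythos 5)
Your proof is correct and follows essentially the same route as the paper: the same Taylor expansion and modulation, the parabolic change of variables with Jacobian factor $K^{(n+2)/p}$, the verification that the rescaled phase remains in $\mathrm{E}_1$, and the use of pseudo-locality to resolve the mismatch between the spatial scale $R^2/K$ and the temporal scale $(R/K)^2$, arriving at the exponent $-2n(\tfrac12-\tfrac1p)+\tfrac2p-\varepsilon$. The only (cosmetic) difference is that you invoke Corollary~\ref{lemma-2} at scale $R/K$ to pass directly to a global-in-space estimate, whereas the paper covers the image region by cylinders $B_\gamma$ of spatial radius $CR^2/K^2$ and applies Lemma~\ref{lemma-1} to each before summing -- which is precisely how Corollary~\ref{lemma-2} is proved, so the two presentations are interchangeable.
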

\begin{proof}
  Suppose that ${\rm supp}\;\hat{f}_{\tau}\subset B^{n}_{K^{-1}}(\xi_\tau)$, and denote $\tilde{\phi}(\xi)$ by
\beqq
\tilde{\phi}(\xi):=\phi(\xi)-\phi(\xi_\tau)-\nabla \phi(\xi_\tau)\cdot\xi,
\eeqq
then
  \begin{align}
  \big|e^{it\phi(D)}f_\tau(x)\big|&=\Big|\int_{ B^{n}_{K^{-1}}(\xi_\tau)} e^{i(x\cdot\xi+t\phi(\xi))}\hat{f}_\tau(\xi)\dif \xi\Big|\nonumber\\
  &=\Big|\int_{ B^{n}_{K^{-1}}(\xi_\tau)} e^{i((x+t\nabla\phi(\xi_\tau))\cdot\xi+t\tilde{\phi}(\xi))}\hat{f}_\tau(\xi)\dif \xi \Big|.\label{eq:pro}
\end{align}
Under an invertible map  $\Phi: (x,t)\rightarrow (y, s)$, i.e.
\beqq
x+t\nabla\phi(\xi_\tau)\rightarrow y, \;\; t\rightarrow s,
\eeqq
\eqref{eq:pro} can be reduced to dealing with
\beq\label{eq:pro1}
\Big|\int_{ B^{n}_{K^{-1}}(\xi_\tau)} e^{i(y\cdot\xi+s\tilde{\phi}(\xi))}\hat{f}_\tau(\xi)\dif \xi \Big|.
\eeq
By the change of variables
\beqq
\xi\rightarrow K^{-1}\xi+\xi_{\tau},
\eeqq
\eqref{eq:pro1} is further reduced to estimating
\beqq
\Big|K^{-n}\int_{B_1^n(0)} e^{i(K^{-1}y\cdot\xi+K^{-2}s\tilde{\phi}_\tau(\xi))}\hat{\tilde{f}}_\tau(\xi)\dif \xi \Big|,
\eeqq
where $\tilde{f}_\tau(\cdot)=e^{-iK\xi_\tau \cdot}K^nf_\tau(K\cdot)$ and
\begin{align*}
  \tilde{\phi}_\tau(\xi)=K^2\big(\phi(\xi_\tau+K^{-1}\xi)-\phi(\xi_\tau)-K^{-1}\nabla\phi(\xi_\tau)\cdot \xi\big).
  \end{align*}
Thus, we have
\begin{align}\label{add-02}
\|e^{it\phi(D)}f_\tau\|_{L^p(B^{n}_{R^2}\times[-R^2,R^2])}^p\lesssim K^{-np}
\|(e^{iK^{-2}s\tilde{\phi}_\tau(D)}\tilde{f}_\tau)(K^{-1}\cdot)\|_{L^p(\Phi(B^{n}_{R^2}\times [-R^2,R^2]))}^p.\end{align}
After the change of variables: $y\rightarrow K \tilde x$, $s\rightarrow K^2\tilde{t}$,  we denote  by $\tilde{\Phi}(B^{n}_{R^2}\times [-R^2,R^2])$ the transformed region from $\Phi(B^{n}_{R^2}\times [-R^2,R^2])$.
 Note that $\tilde{\Phi}(B^{n}_{R^2}\times [-R^2,R^2])$ can be contained in a cylinder of the type $B^{n}_{CR^2/K}\times [-CR^2/K^2, CR^2/K^2]$. Thus we may construct a class of cylinders $\{B_\gamma\}_\gamma$  such that
\beqq
\tilde{\Phi}(B^{n}_{R^2}\times [-R^2,R^2])\subset \bigcup_\gamma B_\gamma, \; B_\gamma=: B^{n}_{CR^2/K^2}(c_\gamma)\times [-CR^2/K^2, CR^2/K^2].
\eeqq
Define
\beqq
\tilde{f}_{\gamma,\tau}:= \Psi_{B_{(R/K)^{2+\varepsilon/10n}}^n(c_\gamma)}\tilde{f}_\tau,
\eeqq
where $\Psi$ is the function introduced in Lemma \ref{lemma-1}.

In order to perform the induction on scales argument, we should verify that the phase function $\tilde{\phi}_\tau$ belongs to the elliptic class ${\rm E}_{1}$.
  Obviously, $\tilde{\phi}_\tau(0)=0$, and the Hessian of $\tilde{\phi}_\tau$ is
  \beqq
  \Big(\frac{\partial^2\phi}{\partial \xi_i\partial \xi_j}\Big)_{n\times n}(K^{-1}\xi+\xi_\tau).
  \eeqq
 Noting that  $\phi \in {\rm E}_1$ and $K^{-1}\xi+\xi_\tau \in {\rm supp}\,\phi $ for $\xi\in B_1^n(0)$, we have $\tilde{\phi}_\tau \in {\rm E}_1$. Hence, using Lemma \ref{lemma-1}, we have
\begin{align*}
&K^{-np}\big\|e^{iK^{-2}s\tilde{\phi}_\tau(D)}\tilde{f}_\tau(K^{-1}\cdot)\big\|_{L^p(\Phi(B^{n}_{R^2}\times [-R^2,R^2]))}^p\\
\lesssim& K^{(-n+\frac{2+n}{p})p} \sum_\gamma \|e^{i\tilde t\tilde{\phi}_\tau(D)}\tilde{f}_\tau\|_{L^p(B_\gamma)}^p\\
\lesssim_\varepsilon &K^{(-n+\frac{2+n}{p})p} \sum_\gamma \|e^{i\tilde t\tilde{\phi}_\tau(D)}\tilde{f}_{\gamma,\tau}\|_{L^p(B_\gamma)}^p+{\rm RapDec(R)}\|f\|_{L^p}^p\\
\lesssim_\varepsilon& K^{(-n+\frac{2+n}{p})p}\Big(\frac{R}{K}\Big)^{2np(\frac{1}{2}-\frac{1}{p})}Q^p_1\big(\tfrac{R}{K}\big)
\sum_\gamma \|\tilde f_{\gamma,\tau}\|_{L^p(\R^n)}^p+{\rm RapDec(R)}\|f\|_{L^p}^p\\
\lesssim_\varepsilon & K^{-2np(\frac{1}{2}-\frac{1}{p})+2-\varepsilon}R^{2np(\f{1}{2}-\f{1}{p})+\varepsilon}Q^p_1\big(\tfrac{R}{K}\big)\|f_\tau\|_{L^p}^p +{\rm RapDec(R)}\|f\|_{L^p}^p.
\end{align*}
This inequality together with \eqref{add-02} yields  \eqref{add-03}.

\end{proof}

We come back to prove Theorem \ref{theoa1}.  We first claim that  \eqref{eq-13} can be reduced to showing for $R \geq  1$,
\beq\label{eq:mainn}
\|e^{it(-\Delta)^{\f{\alpha}{2}}} f\|_{L_{x,t}^p(B^{n}_{R^2}\times [-R^2,R^2])}\lesssim_{\alpha,\varepsilon} R^{2 n(\f{1}{2}-\f{1}{p})+\varepsilon}\|f\|_{L^p(\R^n)},\quad \; {\rm supp} ~\hat{f}\subseteq
B_1^n(0),
\eeq
where $p$ is as in \eqref{eq-12}.

Indeed, by Littlewood-Paley decomposition,
$$e^{it(-\Delta)^{\frac{\alpha}{2}}}f=e^{it(-\Delta)^{\frac{\alpha}{2}}}P_{\leq 1}f
+\sum_{N>1} e^{it(-\Delta)^{\frac{\alpha}{2}}} P_N f,$$
and using the fixed-time estimate \eqref{eq-08},
we easily conclude
\beq\label{eq-25}
\big\|e^{it(-\Delta)^{\frac{\alpha}{2}}}P_{\leq 1}f\big\|_{L_{x,t}^p(\R^n\times[1,2])}
\lesssim \|P_{\leq 1}f\|_{L^p_{s_{\alpha,p}}(\R^n)}\lesssim  \|f\|_{L^p(\R^n)}.
\eeq
Now we come to estimate $e^{it(-\Delta)^{\frac{\alpha}{2}}}P_{N}f$ with $N>1$.
For $R\geq 1$, by Corollary \ref{lemma-2}, \eqref{eq:mainn}, we have
 \beqq
\|e^{it(-\Delta)^{\frac{\alpha}{2}}} g\|_{L_{x,t}^p(\R^n\times [\f{1}{2}R^2,R^2])}\lesssim_{\alpha,\varepsilon} R^{2n(\f{1}{2}-\f{1}{p})+\varepsilon}\|g\|_{L^p(\R^n)}, \;\;{\rm supp} \,\hat{g}\subset
{\rm A}(1).
\eeqq
 Therefore, we obtain
 \beqq
\|e^{it(-\Delta)^{\frac{\alpha}{2}}} P_Nf\|_{L_{x,t}^p(\R^n\times [\f{1}{2}R^2/N^\alpha,R^2/N^\alpha])}\lesssim_{\alpha,\varepsilon} R^{2 n(\f{1}{2}-\f{1}{p})+\varepsilon}N^{-\frac{\alpha}{p}}\|P_Nf\|_{L^p(\R^n)}.
\eeqq
Setting $R=\sqrt{2}N^{\alpha/2}$, we have
\beq\label{trian}
\|e^{it(-\Delta)^{\frac{\alpha}{2}}} P_Nf\|_{L_{x,t}^p(\R^n\times [1,2])}\lesssim_{\alpha,\varepsilon} N^{\alpha n(\f{1}{2}-\f{1}{p})-\f{\alpha}{p}+\varepsilon}\|P_Nf\|_{L^p(\R^n)}.
\eeq
This estimate together with \eqref{eq-25} implies that  for $s>s_{\alpha, p}-\tfrac{\alpha}p$
\begin{align*}
\big\|e^{it(-\Delta)^{\frac{\alpha}{2}}}f\big\|_{L_{x,t}^p(\R^n\times[1,2])}&\leq\big\|e^{it(-\Delta)^{\frac{\alpha}{2}}}P_{\leq 1}f\big\|_{L_{x,t}^p(\R^n\times[1,2])}+\sum_{N>1} \|e^{it(-\Delta)^{\frac{\alpha}{2}}} P_Nf\|_{L_{x,t}^p(\R^n\times [1,2])}\\
&\lesssim_{\alpha,\varepsilon}  \|f\|_{L^p(\R^n)}+\sum_{N>1}N^{\alpha n(\f{1}{2}-\f{1}{p})-\f{\alpha}{p}+\varepsilon}\|P_Nf\|_{L^p(\R^n)}\\
&\lesssim_{\alpha,\varepsilon} \|f\|_{L^p_s(\R^n)},
\end{align*}
and so we verify the above claim. This proves Theorem \ref{theoa1} under the assumption of \eqref{eq:mainn}.

 It remains to prove \eqref{eq:mainn}. We first observe that \eqref{eq:mainn} can be deduced from
 \beq\label{eq:main}
\|e^{it\phi(D)} f\|_{L_{x,t}^p(B^{n}_{R^2}\times [-R^2,R^2])}\lesssim_\varepsilon R^{2 n(\f{1}{2}-\f{1}{p})+\varepsilon}\|f\|_{L^p(\R^n)},\quad \; {\rm supp} ~\hat{f}\subseteq
B_1^n(0),
\eeq
where the phase function $\phi(\xi)\in {\rm E}_1$.

Note that  for $\alpha>1,\xi\neq 0$, by Lemma \ref{le-3}, one easily sees that the eigenvalues of the Hessian matrix of $|\xi|^\alpha$ are $$\underbrace{\alpha |\xi|^{\alpha-2}}_{(n-1)-\text{fold}}, \quad \alpha(\alpha-1)|\xi|^{\alpha-2}.$$
Obviously, the phase function $|\xi|^\alpha,\xi\in A(1)$ may not belong to the ${\rm E}_1$. This problem can be fixed by decomposing $A(1)$ into a series of sufficiently small pieces and making appropriate affine transformations.

Now we show that \eqref{eq:main}  can be deduced from the following proposition.
\begin{proposition}\label{pro1}
  Let $\phi \in {\rm E}_1$. Suppose that
  \beqq
  \big\|e^{it\phi(D)}f\big\|_{{\rm BL}_{k,L}^p(B_{R^2}^{n}\times [-R^2,R^2])}\lesssim_{K,\varepsilon}R^{2n(\frac{1}{2}-\frac{1}{p})+\varepsilon}\|f\|_{L^p},
  \eeqq
  for all  $K\geq 1$,  $\varepsilon>0$ and
  $$2\frac{2n-k+4}{2n-k+2}< p\leq \frac{2k}{k-2},$$
  then
  \beqq
  \big\|e^{it\phi(D)}f\big\|_{L^p(B_{R^2}^{n}\times [-R^2,R^2])}\lesssim_{\varepsilon}R^{2n(\frac{1}{2}-\frac{1}{p})+\varepsilon}\|f\|_{L^p}.
  \eeqq
\end{proposition}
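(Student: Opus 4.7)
The plan is induction on scale. Let $Q_1(R)$ denote the smallest constant for which \eqref{eq-20} holds with $A=(1,\ldots,1)$; the objective is to show $Q_1(R)\lesssim_\varepsilon R^\varepsilon$. Fix a large $K=K(\varepsilon)$ to be chosen last, and cover $B^n_{R^2}\times[-R^2,R^2]$ by a finitely overlapping family of balls $B^{n+1}_{K^2}$. Decomposing $\hat f=\sum_\tau \hat f\chi_\tau$ over a partition of $B^n_1(0)$ into $K^{-1}$-caps $\tau$, the standard Bourgain--Guth broad/narrow dichotomy yields the pointwise bound
\begin{equation*}
|e^{it\phi(D)}f|^p \lesssim \max_{V_1,\ldots,V_L}\max_{\tau\notin V_\ell}|e^{it\phi(D)}f_\tau|^p + K^{O(1)}\max_V\Bigl|\sum_{\tau\in V}e^{it\phi(D)}f_\tau\Bigr|^p
\end{equation*}
on each $B^{n+1}_{K^2}$, with $V$ ranging over $(k-1)$-dimensional linear subspaces of $\R^{n+1}$. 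Integrating and summing splits $\|e^{it\phi(D)}f\|_p^p$ into a broad piece, which is dominated by $\|e^{it\phi(D)}f\|_{{\rm BL}_{k,L}^p}^p$ and is acceptable by hypothesis, and a narrow piece that I must estimate.

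For the narrow piece I would apply Lemma \ref{ndec} on each $B^{n+1}_{K^2}$ (admissible since $p\le 2k/(k-2)$), producing the factor $K^{(k-2)(1/2-1/p)+\delta}$ next to a cap-indexed $\ell^p$-sum. Dropping the restriction $\tau\in V$ via $\max_V\sum_{\tau\in V}\le\sum_\tau$ and reassembling the weights $w_{B^{n+1}_{K^2}}$ into a single integration over $B^n_{R^2}\times[-R^2,R^2]$, I would then invoke the parabolic rescaling of Lemma \ref{pra} on each $e^{it\phi(D)}f_\tau$; a short direct check shows the rescaled phase $\tilde\phi_\tau$ still lies in ${\rm E}_1$, so the induction hypothesis on $Q_1$ applies. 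The rescaling contributes the factor $K^{-2n(1/2-1/p)+2/p-\varepsilon}Q_1(R/K)R^{2n(1/2-1/p)+\varepsilon}$ per cap. Finally, for $p\ge 2$, the Rubio de Francia square function inequality gives $\sum_\tau\|f_\tau\|_p^p\lesssim\|f\|_p^p$, so the narrow contribution is controlled by
\begin{equation*}
K^{(k-2-2n)(p/2-1)+2+O(\delta p,\varepsilon p)}\,Q_1(R/K)^p\,R^{2np(1/2-1/p)+\varepsilon p}\,\|f\|_p^p.
\end{equation*}

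The strict inequality $(k-2-2n)(p/2-1)+2<0$ is algebraically equivalent to $p>2(2n-k+4)/(2n-k+2)$, which is exactly the hypothesis of the proposition. Choosing $\delta$ small first, and then $K$ large enough that the $K$-exponent becomes a strictly negative $-c(p)$ dominating all implicit constants, I close the induction by combining the broad and narrow estimates into
\begin{equation*}
Q_1(R)^p \le C_{K,\varepsilon}\,R^{\varepsilon p}+K^{-c(p)}Q_1(R/K)^p\,R^{\varepsilon p},
\end{equation*}
and choosing $C_\varepsilon$ so that $C_\varepsilon^p\ge 2C_{K,\varepsilon}$; the usual induction-on-scales fixed-point step then yields $Q_1(R)\le C_\varepsilon R^\varepsilon$ for all $R\ge 1$.

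The main technical obstacle I expect is the careful bookkeeping of Schwartz tails and weighted $L^p$ errors across Lemmas \ref{lemma-1}--\ref{pra}: at each step the ``${\rm RapDec}$'' terms and the shifted bump weights must be absorbed without inflating constants, since the decoupling, rescaling, and square-function reassembly steps need to chain together cleanly. A closely related subtlety is verifying that after parabolic rescaling the new phase remains in the class ${\rm E}_1$ (rather than only in some ${\rm E}_A$ with $A$ growing); this preservation is precisely where the ellipticity of $\phi$ pays off, allowing the induction hypothesis on $Q_1$ to reapply faithfully at scale $R/K$.
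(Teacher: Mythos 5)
Your decomposition and numerology track the paper's argument closely (broad part via the ${\rm BL}^p_{k,L}$ hypothesis, narrow part via Lemma \ref{ndec}, then Lemma \ref{pra} and $\sum_\tau\|f_\tau\|_p^p\lesssim\|f\|_p^p$, with $e(p,k,n)>0$ exactly when $p>2\tfrac{2n-k+4}{2n-k+2}$), but the closing step has a genuine gap: you fix $K=K(\varepsilon)$ and claim the ``usual induction-on-scales fixed-point'' closes the recursion $Q_1(R)^p\le C_{K,\varepsilon}R^{\varepsilon p}+K^{-c(p)}R^{\varepsilon p}Q_1(R/K)^p$. It does not, precisely because of the $R^{\varepsilon}$ loss that Lemma \ref{pra} (and the broad-norm bound) incurs at \emph{every} scale, which you yourself carried into the recursion. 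If you insert the inductive bound $Q_1(R/K)\le C_\varepsilon (R/K)^\varepsilon$, the second term is of size $K^{-c(p)-\varepsilon p}C_\varepsilon^p R^{2\varepsilon p}$, so the exponent degrades from $\varepsilon$ to $2\varepsilon$ at one step; relabelling $\varepsilon$ does not help, because descending through the $\sim\log_K R$ scales needed to reach unit scale compounds the per-step loss to roughly $R^{\varepsilon\log_K R}$, while the gain accumulated from the fixed $K$ is only $K^{-c(p)\log_K R}=R^{-c(p)}$, a fixed power of $R$. Equivalently, absorbing the second term into $\tfrac12 C_\varepsilon^p R^{\varepsilon p}$ would require $Q_1(R/K)\lesssim K^{c(p)/p}$ uniformly in $R$, which is not available. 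So with a fixed $K$ the induction cannot yield $Q_1(R)\lesssim_\varepsilon R^\varepsilon$ from the lossy rescaling lemma proved in this paper.

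The missing idea is the paper's choice of $K$ as a small power of $R$: one sets $K=K_0R^{\tilde\delta}$, enlarges the cylinder to $C^{n+1}_{R^{2+2\delta}}$ when reassembling the weights (so the rescaling is applied at scale $R^{1+\delta}/K$, not $R/K$), and tunes $\delta=\varepsilon_1/2$ and $\tilde\delta$ so that the coefficient $K^{\delta-e(p,k,n)-\varepsilon_1}R^{2\delta np(\frac12-\frac1p)+\varepsilon_1}$ of $Q_1^p(R^{1+\delta}/K)$ carries a \emph{strictly negative} power of $R$; the $K^{-e(p,k,n)}$ gain, now itself a power of $R$, dominates the per-step $R^{\varepsilon_1}$ and $R^{O(\delta)}$ losses, only a bounded number of absorptions is needed (since $R^{1+\delta}/K\le R^{1-(\tilde\delta-\delta)}$), and the broad constant $C(\varepsilon,L,K)$, polynomial in $K$ with $\tilde\delta<\varepsilon$, stays harmless. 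Your other steps (weight reassembly, ${\rm RapDec}$ tails, and the verification that $\tilde\phi_\tau\in{\rm E}_1$ after rescaling) are handled in the paper essentially as you anticipate; it is only the fixed-$K$ closure that needs to be replaced by this $R$-dependent choice of $K$ (or else by a lossless version of the rescaling lemma, which the paper does not provide).
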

{\bf Proof of \eqref{eq:main}}.\; By Corollary \ref{cor1} and Proposition \ref{pro1}, we obtain \eqref{eq:main} if
\beqq
p> \min_{2\leq k\leq n+1}\max\Big\{2\frac{n+k+1}{n+k-1},2\frac{2n-k+4}{2n-k+2} \Big\}.
\eeqq
In particular, if we choose
\begin{align*}
  k=\left\{\begin{aligned}
  &\tfrac{n+3}{2},\quad  n ~\text{\rm is odd},\\
&\tfrac{n+4}{2}, \quad n~ \text{\rm is even},
  \end{aligned}\right.
\end{align*}
then we  obtain the optimal range as in \eqref{eq-12}.

\begin{proof}[{\bf The proof of Proposition \ref{pro1}}]\;
Let $r>0$, for the sake of convenience, we denote $C_{r}^{n+1}$ to be the cylinder $B_r^{n}\times [-r,r]$. For a given ball $B^{n+1}_{K^2}\subset C_{R^2}^{n+1}$, assume that a choice of $(k-1)$-dimensional subspaces $V_1\ldots V_L$  which achieves the minimum in the definition of the $k$-broad ``norm",  we obtain
\beqq
\int_{B^{n+1}_{K^2}} |e^{it\phi(D)}f(x)|^p\dif x\dif t\lesssim K^{O(1)}\max_{\tau\notin V_{\ell}}\int_{B^{n+1}_{K^2}} \big|e^{it\phi(D)}f_\tau(x)\big|^p\dif x\dif t+\sum_{\ell=1}^L\int_{B^{n+1}_{K^2}}\Big|\sum_{\tau\in V_{\ell}} e^{it\phi(D)}f_\tau(x)\Big|^p\dif x\dif t.
\eeqq
Summing over the balls $\{B^{n+1}_{K^2}\}$ yields
\beq\label{eq-26}
\begin{aligned}
\int_{C_{R^2}^{n+1}} \big|e^{it\phi(D)}f(x)\big|^p \dif x \dif t\lesssim&  K^{O(1)}\sum_{B^{n+1}_{K^2}\subset C_{R^2}^{n+1}}\min_{V_1,\ldots V_L}\max_{\tau\notin V_{\ell}}\int_{B^{n+1}_{K^2}} |e^{it\phi(D)}f_\tau(x)|^p\dif x \dif t\\
&+\sum_{B^{n+1}_{K^2}\subset C_{R^2}^{n+1}}\sum_{\ell=1}^L \int_{B^{n+1}_{K^2}}\Big|
\sum_{\tau\in V_{\ell}} e^{it\phi(D)}f_\tau(x)\Big|^p\dif x\dif t.
\end{aligned}
\eeq
 Now we  use  \eqref{eq-23} to estimate the contribution of the first term in the right-hand side  of \eqref{eq-26}.
Let $\varepsilon>0$ be determined lately. Using Corollary \ref{cor1}, we have
\begin{equation}\label{add-04}
\sum_{B^{n+1}_{K^2}\subset C_{R^2}^{n+1}}\min_{V_1,\ldots V_L}\max_{\tau\notin V_{\ell}}\int_{B_{K^2}^{n+1}} |e^{it\phi(D)}f_\tau(x)|^p\dif x\dif t\lesssim C(\varepsilon,L,K) R^{2np(\frac{1}{2}-\frac{1}{p})+ \varepsilon p/2}\|f\|_{L^p}^p.
\end{equation}

Now we use Lemma \ref{ndec} and parabolic rescaling as in Lemma \ref{pra}
to estimate the contribution of the second term in the right-hand side of \eqref{eq-26}.  Let $\delta>0$ to be chosen later.
 It follows from  Lemma \ref{ndec} that
\beqq
\sum_{\ell=1}^L\int_{B^{n+1}_{K^2}}\Big|\sum_{\tau\in V_{\ell}} e^{it\phi(D)}f_\tau(x)\Big|^p\dif x \dif t
\lesssim C(\delta,L) K^\delta K^{(k-2)(\frac 12-\frac1p)p}\sum_\tau\int_{\R^{n+1}} w_{B^{n+1}_{K^2}}\big|e^{it\phi(D)}f_\tau(x)\big|^p\dif x\dif t.
\eeqq
Summing over $B^{n+1}_{K^2}$ in both sides of the above inequality,  we obtain
\begin{align}
&\sum_{B^{n+1}_{K^2}\subset C_{R^2}^{n+1}}\sum_{\ell=1}^L\int_{B^{n+1}_{K^2}}\Big|\sum_{\tau\in V_{\ell}} e^{it\phi(D)}f_\tau(x)\Big|^p\dif x \dif t\nonumber\\ \lesssim& C(\delta,L)  K^\delta K^{(k-2)(\frac{1}{2}-\frac{1}{p})p} \sum_{\tau}\int_{\mathbb{R}^{n+1}} w_{C_{R^2}^{n+1}} \Big|
 e^{it\phi(D)}f_\tau(x)\Big|^p\dif x\dif t.\label{add-05}
\end{align}
Using the rapidly decaying property of the weight function, we have
\beqq
\int_{\mathbb{R}^{n+1}} w_{C_{R^2}^{n+1}} \Big|
 e^{it\phi(D)}f_\tau(x)\Big|^p\dif x\dif t\leq \int_{C_{R^{2+2\delta}}^{n+1}}\Big|
 e^{it\phi(D)}f_\tau(x)\Big|^p\dif x\dif t+{\rm RapDec}(R)\|f\|_{p}^p.
\eeqq
Choosing $\varepsilon_1>0$, we obtain by  Lemma \ref{pra}
\begin{align*}
&\int_{C_{R^{2+2\delta}}^{n+1}}\Big|
 e^{it\phi(D)}f_\tau(x)\Big|^p\dif x\dif t\\
 \lesssim_{\varepsilon_1}& K^{-2 n(\frac{1}{2}-\frac{1}{p})p+2-\varepsilon_1}Q^p_1\Big(\f{R^{1+\delta}}{K}\Big)R^{2(1+ \delta)np(\f{1}{2}-\f{1}{p})+\varepsilon_1}\big\|f_\tau\big\|_p^p+{\rm RapDec}(R)\|f\|_{p}^p.
\end{align*}
Summing over $\tau$ and noting that
\beqq
\sum_{\tau}\|f_\tau\|_p^p\leq C \|f\|_{L^p}^p, \quad\text{ for} \;\;2\leq p\leq \infty,
\eeqq
we have
\begin{align}
&\sum_\tau \int_{\mathbb{R}^{n}} w_{C_{R^2}^{n+1}} \Big|
 e^{it\phi(D)}f_\tau(x)\Big|^p\dif x\dif t\nonumber\\
 \lesssim_{\varepsilon_1}& K^{-2 n(\frac{1}{2}-\frac{1}{p})p+2-\varepsilon_1}Q^p_1\Big(\f{R^{1+ \delta}}{K}\Big)R^{2(1+ \delta)np(\f{1}{2}-\f{1}{p})+\varepsilon_1}\big\|f\big\|_p^p+{\rm RapDec}(R)\|f\|_{p}^p.\label{add-06}
\end{align}
Collecting the estimates \eqref{add-04}-\eqref{add-06} and inserting them into \eqref{eq-26},  we obtain
\begin{align*}
\int_{C_{R^2}^{n+1}}|e^{it\phi(D)}f(x)|^p&\dif x\dif t\leq C(\varepsilon,L,K)R^{2np(\frac{1}{2}-\frac{1}{p})+\varepsilon p/2}\|f\|_{L^p}^p\\&+C(\delta,\varepsilon_1,L)K^\delta R^{2(1+\delta)np(\f{1}{2}-\f{1}{p})+\varepsilon_1}K^{-e(p,k,n)-\varepsilon_1}Q^p_1\Big(\f{R^{1+ \delta}}{K}\Big)\|f\|_{L^p}^p,
\end{align*}
where
$$e(p,k,n):=-(k-2)(\frac{1}{2}-\frac{1}{p})p+2 n(\frac{1}{2}-\frac{1}{p})p-2>0,\quad p>2\frac{2n-k+4}{2 n-k+2}.$$
Therefore by the definition of $Q_1(R)$, we have
\beqq
Q_1^p (R)\leq C(\varepsilon,L,K)R^{\frac{\varepsilon}2p}+C(\delta,\varepsilon_1,L)K^\delta R^{2 \delta np(\f{1}{2}-\f{1}{p})+\varepsilon_1}K^{-e(p,k,n)-\varepsilon_1}Q^p_1\Big(\f{R^{1+\delta}}{K}\Big).
\eeqq

Fix $p>2\frac{2n-k+4}{2 n-k+2}$, let $K=K_0 R^{\tilde \delta}$ with $ K_0>0$ being a large constant to be chosen later, and
$$\delta=\varepsilon_1/2,\;\;\tilde{\delta}=\f{2\varepsilon_1 (1+np(1/2-1/p))}{e(p,k,n)+\varepsilon_1/2},\;\; 0<\varepsilon_1<\f{2\varepsilon e(p,k,n)}{4+4np(\f{1}{2}-\f{1}{p})-\varepsilon}$$  such that the resulting power of $R$ is negative and $0<\tilde \delta <\varepsilon$.

Recall that, in the process of estimating the broad part, the constant $C(\varepsilon,L,K)$ grows at most polynomially with respect to $K$, by choosing $1\ll K_0$  such that $$C(\delta,\varepsilon_1,L)K^{\delta-e(p,k,n)-\varepsilon_1}_0<\f{1}{2},$$ it follows
\beqq
Q_1 (R)\lesssim_\varepsilon R^\varepsilon.
\eeqq
Thus we finish the proof of Proposition \ref{pro1}.
\end{proof}

\section{appendix}\label{section-5}

\noindent{\bf Further tractable approach}.\;
 The result in Theorem \ref{theoa1} relies on the following sharp $k$-broad ``norm" estimates in \cite{GHI}: Let
 $p\geq \frac{2(n+k+1)}{n+k-1},{\rm supp}\hat{f}(\xi)\subset {\rm A}(1)$,
 then
\beq\label{eq-35}
  \|e^{it(-\Delta)^{\frac{\alpha}2}}f\|_{{\rm BL}_{k,L}^p(B^{n}_{R^2}\times [-R^2,R^2])}\lesssim_{\alpha,\varepsilon}R^{\varepsilon} \|\hat{f}\|_{L^2},\;\;\alpha> 1.
  \eeq
  Using the pseudo-locality property of the operator $e^{it(-\Delta)^{\tfrac{\alpha}{2}}}$ with $\alpha> 1$ and H\"older's inequality, we have
  \beq \label{eq-36}\|e^{it(-\Delta)^{\frac{\alpha}2}}f\|_{{\rm BL}_{k,L}^p(B^{n}_{R^2}\times [-R^2,R^2])}\lesssim_{\alpha,\varepsilon}R^{2 n(\f{1}{2}-\f{1}{p})+\varepsilon} \|f\|_{L^p(\R^n)}, \;\;\alpha>1.
  \eeq
  To improve the result in Theorem \ref{theoa1}, we expect to establish \eqref{eq-36} directly for some $p<\frac{2(n+k+1)}{n+k-1}$.
\vskip0.25cm

\noindent{\bf Remark on the local smoothing of the half-wave operator.} There are some troubles in generalizing the above method to handle the local smoothing estimates of the half-wave operator $e^{it\sqrt{-\Delta}}$, which is of great interest. For the restriction problem of the cone operator, using the Lorentz transformation, we can reduce to considering
 \beqq
 \int_{\R^n} e^{i(x_1\xi_1+\cdots +x_n\xi_n+x_{n+1}\frac{\xi^2_1+\cdots +\xi_{n-1}^2}{2\xi_n})}\eta(\xi)f(\xi)\dif \xi,
 \eeqq
of which the structure is well suited for the rescaling argument. But, for the local smoothing problem, we can't establish the corresponding parabolic rescaling lemma in this setting. In fact, the relationship between  $e^{it\sqrt{-\Delta}}f$ and
\beqq
 \int_{\R^n} e^{i(x_1\xi_1+\cdots +x_n\xi_n+x_{n+1}\frac{\xi^2_1+\cdots +\xi_{n-1}^2}{2\xi_n})}\eta(\xi)\hat{f}(\xi)\dif \xi
 \eeqq
 is uncertain for us.

\subsection*{Acknowledgements}  We are grateful for David Beltran's warm comments.
 C.~Gao was supported by Chinese Postdoc Foundation
Grant 8206300279,  C.~Miao was supported by NSFC Grants 12026407 and 11831004, and J.~Zheng was supported by PFCAEP
project No.~YZJJLX2019012 and NSFC Grant 11901041.

\bibliographystyle{amsplain}

\end{document}